	\newcommand{\Mod}[1]{\ (\mathrm{mod}\ #1)}
\date{}
\newtheorem{theorem}{Theorem}[section]
\newtheorem{lemma}[theorem]{Lemma}
\newtheorem{corollary}[theorem]{Corollary}
\newtheorem{proposition}[theorem]{Proposition}
\newtheorem{example}[theorem]{Example}
\newtheorem{remark}[theorem]{Remark}
\newtheorem{definition}[theorem]{Definition}
\newtheorem{question}[theorem]{Question}
\newtheorem{property}[theorem]{Property}
\numberwithin{equation}{section}
\newcommand*\xor{\mathbin{\oplus}}
\newcommand*\eps {\varepsilon}
 \title{On multistochastic Monge--Kantorovich problem, bitwise operations,  and fractals}
\author{Nikita~A.~Gladkov}
\address{National Research University Higher School of Economics, Moscow,  Russia}
\email{gladkovna@gmail.com}
\author{Alexander~V.~Kolesnikov}
\address{National Research University Higher School of Economics, Moscow,  Russia}
\email{Sascha77@mail.ru}
\author{Alexander~P.~Zimin}
\address{National Research University Higher School of Economics, Moscow,  Russia}
\email{alekszm@gmail.com}
\thanks{
 The
second named author was supported by RFBR
 project 17-01-00662 and  DFG project RO 1195/12-1. The article was prepared
within the framework of the Academic Fund Program at the National Research University Higher
School of Economics (HSE) in 2017--2018 (grant No 17-01-0102) and by the Russian Academic
Excellence Project 5-100.
}
\keywords{Monge--Kantorovich problem, doubly stochastic and multistochastic measures, optimal transportation, dual transportation problem, bitwise addition, fractals, Sierpi\'nski tetrahedron}
\begin{document}

  \begin{abstract}
  The multistochastic  $ (n,k)$-Monge--Kantorovich problem  on a product space $\prod_{i=1}^n X_i$
  is an extension of the classical Monge--Kantorovich problem. This problem is considered on the space of measures  with fixed projections
  onto   $X_{i_1} \times \ldots \times X_{i_k}$ for all $k$-tuples $\{i_1, \ldots, i_k\} \subset \{1, \ldots, n\}$ for a given $1 \le k < n$.
  In our paper we study well-posedness of the primal and the corresponding dual problem. Our central result 
  describes a solution $\pi$ to the following important model case: $n=3, k=2, X_i = [0,1]$, the cost function $c(x,y,z) = xyz$, and the corresponding two--dimensional   projections are Lebesgue measures on $[0,1]^2$. We prove, in particular, that 
   the mapping $(x,y) \to x \oplus y$, where $\oplus$ is the bitwise addition (xor- or Nim-addition) on $[0,1] \cong \mathbb{Z}_2^{\infty}$, is the corresponding optimal transportation. In particular, the support of $\pi$ is the Sierpi\'nski tetrahedron. 
   In addition, we describe a solution to the corresponding dual problem. 
  \end{abstract}

\maketitle

\section{Introduction}

In this paper we consider a natural modification of the Monge--Kantorovich mass transportation problem
which we call ``multistochastic Monge--Kantorovich problem''. To our best knowledge, this  problem has  never been studied before.

Assume we are given two probability measures $\mu, \nu$ on measurable spaces $X, Y$ and a function
$c \colon X \times Y \to \mathbb{R}$.
Let us remind the reader that the classical Kantorovich or transportation problem
is a problem of minimization of the functional
\begin{equation}
\label{KF}
\int_{X \times Y} c(x,y) d \pi,
\end{equation}
on the set $\Pi(\mu,\nu)$ of probability measures on $X \times Y$ with fixed marginals $\mu, \nu$.

An important tool to attack this problem coming from the linear programming theory is the so-called dual  transportation problem:
maximize
$$
\int f d \mu + \int g d\nu
$$
on the set of couples of (integrable) functions $(f,g)$ satisfying
$f(x) + g(y) \le c(x,y)$.

The most classical cost function $c$ is given by the distance function, but the quadratic cost function $c(x,y) = |x-y|^2$ gains incredible popularity because of impressive number of applications.
For the quadratic cost function any standard  solution $\pi$ is concentrated on the graph of a mapping $T$. In this case $T$ is a solution of the corresponding Monge problem which asks for a mapping minimizing the  functional $\int c(x,T(x)) d \mu$ in the class of measure preserving
(i.e. pushing forward $\mu$ onto $\nu$) mappings.

Since its revival at the end of eighties the  Monge--Kantorovich theory attracts growing attention.
The reader can find a lot of information on the classical mass transportation theory   in many recent textbooks and survey papers
\cite{BoKo}, \cite{Galichon}, \cite{McCannGuill},  \cite{RR}, \cite{Villani}, \cite{Villani2}.

Our research is motivated by a number of recent results appeared in  several  quickly developping branches
of the mass transportation theory. Here is a short outline of the most important problems and ideas.

1)  Multimarginal transportation problem.

The book of Rachev and R{\"u}schendorf \cite{RR} contains rich material on the multimarginal transportation problem,
in particular, a number of functional-analytical results on duality, probabilistic applications etc.
However, until recently only the  two-marginals case was important for the largest part of applications. 
The books of Villani \cite{Villani}, \cite{Villani2} deal with the most important but specific  two-marginals case.

The revival of interest to the multimarginal Monge--Kantorovich problem  is partially motivated by economical applications
(matching theory, multi-dimensional screening), see \cite{McCannGuill}, \cite{Galichon}.  We refer to  survey paper 
\cite{Pass}.  Many references on recent works on multimarginal duality theory for a wide class of cost functions can be found in \cite{BoKo}.

2) Doubly- and multistochastic measures.

According to the classical  Birkhoff-von Neumann theorem every bistochastic matrix is a convex combination of 
permutation matrices. More precisely, the permutation matrices are exactly the extreme points  
of the set of bistochastic matrices. The classical problem of Birkhoff asks for a generalization of this result 
for the set of bistochastic (doubly stochastic) measures $\Pi(\mu,\nu)$. This problem has been attacked by many researches 
(see \cite{S}, \cite{BS}, \cite{HW}, \cite{AKMC}),
let us in particularly mention the seminal paper \cite{HW}, containing a characterization of supports of such measures. 
Using this characterization Ahmad, Kim, and McCann obtained in \cite{AKMC} interesting results on uniqueness of solution to the optimal transportation problem.
Exposition of relations between bistochastic measures, Markov operators, and Markov chains can be found in \cite{Vershik}.

In this paper we deal with $(n,k)$-stochastic measures, which are probability measures on a product space $$X = X_1 \times X_2 \times \ldots \times X_n$$ with fixed projections $\mu_{I}  \in \mathbb{P}(X_{I})$ 
for every $X_{I} = X_{i_1} \times \ldots \times X_{i_k}$, where $I = \{i_1, \ldots, i_k\}$ is a  $k$-tuple of indices, $k<n$.
The simplest (and most famous) example of such measures is given by the set of latin squares which is homeomorphic
to $(3,2)$-stochastic matrices.
It is important to emphasize that for the set of $(n,k)$-stochastic matrices (measures) with $n>2$
there is no analog of  the Birkhoff-von Neumann  theorem  (see \cite{Kil}, \cite{LL}, see also  \cite{CLN} for description of extreme points
for $k=2, n=3$ in the discrete case).

3) Monge--Kantorovich problem with linear constraints and  Monge--Kantorovich problem with symmetries.

Apparenty the most famous example of a transportation problem with linear constraints 
is the optimal martingale transportation problem coming from financial mathematics.  This problem is obtained from the classical one by adding an additional 
constraint: the measure $\pi$ is assumed to be a martingale (to make the space of feasible measures non-empty $\mu$ should stochastically dominate $\nu$). 
The dual martingale problem has a natural financial interpretation (see \cite{BHLP}).
More information about martingale transportation  the reader can find in \cite{BeigJuill},  \cite{BHLP}, \cite{GHLT}, \cite{HL}.
Remarkably, a  duality theorem for transportation problem with general linear constraints has been obtained only recently 
in \cite{Zaev}. This results covers, in particular, the case of martingale constraints. Another important class of linear constraints
are various symmetric assumptions, in particular, invariance with respect to an action of some group of linear operators.
This type of problem has been studied in \cite{Zaev}, \cite{Moameni}, \cite{GhM}.
Applications of symmetric problem to infinite-dimensional analysis and links with ergodic theory can be found in \cite{KZ1}, \cite{KZ2}.
The  Monge--Kantorovich problems with some convex constraints has been considered in  
\cite{Lev},  \cite{KorMC}, \cite{KorMCS}.

In this paper we consider the problem of maximization/minimization of the functional  
$$
\int_{X} c(x_1, \ldots,x_n) d \pi
$$
 on a set of $(n,k)$-stochastic measures. We call it mutistochastic or $(n,k)$-stochastic Kantorovich problem.
 Clearly, for $k=1$ one gets the multistochastic Kantorovich problem with $n$-marginals.
 
 The system of projections $\{\mu_I\}$ can not be arbitrary for $k>1$, and in fact, it is a nontrivial question, 
 when the set  of $(n,k)$-stochastic measures is non-empty. 
 This problem illustrates the main source of difficulties for the multistochastic problem: the constraints are highly
 non-independent, unlike the classical Monge--Kantorovich problem. 
 We stress that existence of feasible measures is just one question among  many others which have trivial solutuions for the classical case, but not for the multistochastic one.
  On the other hand, the classical example of latin squares and its relation
 with discrete algebraic structures (groups and quasigroups) ensures that it is an interesting and non-artificial object.
 
 We start with consideration of two basic questions of the mass transportation theory: duality and cyclical monotonicity.
 A natural guess that the dual problem should be the maximization problem
 $$
 \sum_{I} \int f_{I}(x_{i_1}, \ldots, x_{i_k}) d \mu_{I}, 
 $$
 with the constraint $\sum_{I}  f_{I}(x_{i_1}, \ldots, x_{i_k}) \le c$ is verified in Section 3 in a form analogous to the duality theorem considered
 in \cite{Villani}. The proof is based on the minimax principle. In Section 4 we prove an analog of the cyclical monotonicity property.
Unfortunately, applications of the cyclical nonotonicity are not that as straightforward as in the classical case.
 The main difficulty here is that the set of discrete competitors 
 is essentially more  complicated that the  permutation cycles considered in the classical transportation theory. We don't know, whether any solution to a multistochastic problem 
 (for a reasonable choice of the cost function $c$) is concentrated on the graph of a mapping (this is a standard corollary of the cyclical monotonicity property in the classical case). The uniqueness question is open as well. We were able, however, to deduce
 from the cyclical monotonicity property that any solution is singular to the Lebesgue measure 
 under assumption that the projections have densities ($k=2$, $n=3$, $X_i = [0,1]$).
 
 In Sections 5-6 we study our main example: $k=2$, $n=3$, $X_i = [0,1]$,
 the two-dimensional projections are assumed to be Lebesque and 
 $$
 c(x,y,z) = xyz.
 $$
 Let us consider the maximization problem $\int_{[0,1]^3} xyz \to \max$.
 We show that there exists a solution which is concentrated on the graph of the mapping 
 $$
 (x,y) \to 1 - x \oplus y,
 $$
 where $\oplus$ is the bitwise addition, which is also called xor-addition or Nim-addition.
 
 Similarly, for the minimization problem $\int_{[0,1]^3} xyz \to \min$
the solution $\pi$  is concentrated on the graph of the mapping 
$$
(x,y) \mapsto x \oplus y.
$$ 

It is known that the bitwise operations can be used to generate fractals 
(see \cite{Gib}, \cite{FK}). In particular, the 
 graph of this mapping 
$(x,y) \mapsto x \oplus y$ is 
 the so-called Sierpi\'nski tetrahedron.

\centerline{\includegraphics[scale = 0.4]{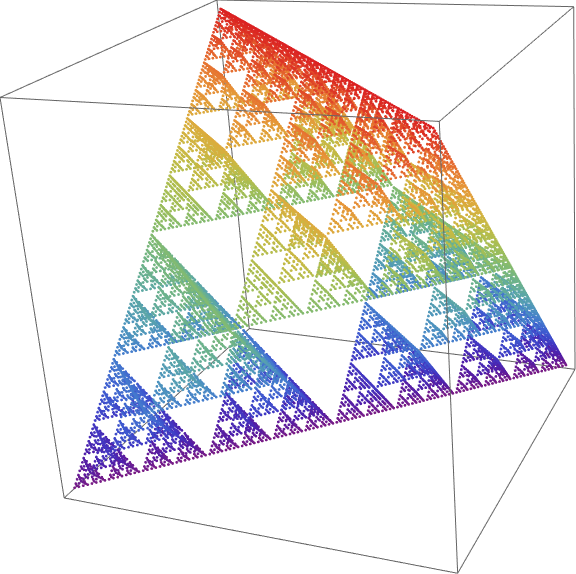}}

This is a classical   fractal self-similar set of dimension 2.
In the book of Mandelbrot \cite{M} it is briefly  described under the name ``fractal skewed web'' :
``Let us project it
along a direction joining the midpoints of either
couple of opposite sides. The initiator
tetrahedron projects on a square, to be called
initial. Each second-generation tetrahedron
projects on a subsquare, namely $1/4$-th
initial square, etc. Thus, the web projects on
the initial square. The subsquares' boundaries
overlap.''. 

  The irregularity of this example is rather unexpected, since it is well-understood that the standard  solutions to 
 the classical Monge--Kantorovich problems are supported by regular surfaces. On the other hand
the close relation of latin squares to groups makes the appearence of the xor-operation (equivalently, of the group $\mathbb{Z}_2^{\infty}$) natural.
The appearence of the bitwise addition can be also  illustrated by the baby $(3,2)$-transportation problem on the  cube
$\{0,1\}^3$ with $c=xyz$. All the competitors with uniform projections are convex  combinations of two measures: $$\{(0,0,0), (0,1,1), (1,1,0), (1, 0,1) \},$$ $$\{(0, 1, 0), (0, 0,1), (1,0,1), (1,1,1)\}$$ (we identify the point and the 
Dirac mass with weight $1/4$ at the point), defined by equations
$$
 x + y + z=0, \  x +y + z =1, \ mod(2).
$$
The first measure minimizes $xyz$ and the second measure maximizes.
The fact that essentially the  same structure is preserved for the cube $[0,1]^3$ due to the symmetries of the corresponding continuous problem.

Let us consider again the minimization problem $\int_{[0,1]^3} xyz \to \min$. We show in Section 7 
that
$$
F(x,y) = \int_{0}^x \int_{0}^y s \oplus t \ ds dt - \frac{1}{4} \int_{0}^x \int_{0}^x s \oplus t \ ds dt
- \frac{1}{4} \int_{0}^y \int_{0}^y s \oplus t \ ds dt.
$$
solves the corresponding dual problem
$$
\int_{[0,1]^2} F(x,y) dx dy  + \int_{[0,1]^2} F(x,z) dx dz + \int_{[0,1]^2} F(y,z) dy dz  \to \max,
$$
$$F(x,y) + F(x,z) + F(y,z) \le xyz.$$
In particular, the corresponding optimal mapping $T$ takes the form
$$
(x,y) \mapsto \partial^2_{xy} F(x,y)
$$
and the Sierpi\'nski tetrahedron is the set of zeroes  of the nonnegative function 
$$
xyz - F(x,y) - F(x,z) - F(y,z).
$$
In addition, this function is almost everywhere differentiable and homogeneous with respect to factor $2$.
The first derivatives of this function are not differentiable, but have bounded variation.

It is an open question which particular properties of this solution are inherited by general solutions to the (3,2)-problem.
We  discuss some related hypotheses in Section 8.

\section{Multistochastic problem. Basic properties.}

In this short section we define the main objects of our study and discuss their basic properties.
We are given a finite number of spaces
$$
X_1 , \ldots, X_n,
$$
equipped with 
$\sigma$-algebras
$$
\mathcal{B}_1, \ldots, \mathcal{B}_n.
$$
The product space
$$
X = X_1 \times \ldots \times X_n,
$$
is equipped  with the standard  product of $\sigma$-algebras
$$
\mathcal{B} = \mathcal{B}_1 \times \ldots \times \mathcal{B}_n.
$$
The projection 
$$
X \ni x \to (x_{i_1}, \ldots, x_{i_k})
$$
of $X$ onto 
$
X_{i_1} \times \ldots X_{i_k}, \ i_j \in \{ 1, \ldots, n\}$ with $i_{j_1} \ne i_{j_2}$ for distinct $j_1, j_2$,
will be denoted by
$$
{\rm Pr}_{X_{i_1} \times \ldots \times X_{i_k}}, \ {\rm Pr}_i,
$$
where $i = (i_1, \ldots, i_k)$.

Thoroughout the paper the following assumption holds:

{\bf Assumption I.}  $X_i$ are Polish spaces and $\mathcal{B}_i$ are the corresponding Borel $\sigma$-algebras.

\begin{definition}
\label{msK}
{\bf (Multistochastic Kantorovich problem)}

For every fixed $1 \le k < n$ let  $\mathcal{I}_k$ 
be the set of all ordered
$k$-tuples of indices $i_j \in \{1, \ldots, n\}$, $i_1 < i_2 <\ldots < i_{k-1} < i_k$. Assume that for every $k$-tuple
$$
I = (i_1, \ldots, i_k) \in \mathcal{I}_k
$$ we are given 
a probability measure $\mu_{I} = \mu_{i_1, \ldots, i_k}$ on $X_{i_1} \times \ldots \times X_{i_k}$.
Denote by $\mathcal{P}_{\mu}$ the set of probability measures on $X$ satisfying
$$
{\rm Pr}_I \mu := {\rm Pr}_{X_{i_1} \times \ldots \times X_{i_k}} \mu = \mu_{I}.
$$
Finally, assume that we are given a cost function  $$c \colon  \prod_{i=1}^n X_i \to \mathbb{R}_{+} \cup \{+\infty\}.$$

Then we say that $P \in \mathcal{P}_{\mu}$ is a solution to the $(n,k)$-Kantorovich minimization problem for $c$ and
$\{\mu_i\}, I \in \mathcal{I}_k$, if $P$ gives minimum to the functional
$$
P \to \int_{X} c  \ d P
$$ 
on $\mathcal{P}_{\mu}$. 

We call the problem ``$(n,k)$-Kantorovich maximization problem''
if instead of minimum we are looking for maximum of
$
P \to \int_{X} c  \ d P.
$  
\end{definition}

Unlike the standard   Kantorovich problem  $\mathcal{P}_{\mu}$ can be empty. Let us briefly discuss some sufficient 
conditions assuring that  $\mathcal{P}_{\mu}$ is not empty. For the sake of simplicity we restrict ourselves
to the case $n=3, k =2$, $X_i = [0,1]$. 

A natural necessary assumption for $\mathcal{P}_{\mu} \ne \emptyset$ is the following Kolmogorov-type consistency condition.

\begin{remark}
If the set $\mathcal{P}_{\mu}$ is not empty, then 
\begin{equation}
\label{consist}
{\rm Pr}_{1} \mu_{1,2} = 
{\rm Pr}_{1} \mu_{1,3}, \ 
{\rm Pr}_{2} \mu_{1,2} = 
{\rm Pr}_{2} \mu_{2,3}, \
{\rm Pr}_{3} \mu_{2,3} = 
{\rm Pr}_{3} \mu_{1,3}.
\end{equation}
\end{remark}

\begin{remark}
One can naively think that (\ref{consist}) is a sufficient condition for  $\mathcal{P}_{\mu} \ne \emptyset$.
But this is not true. Consider the following example: 
$\mu_{1,2}$  is given by the (normalized)  Lebesgue measure on the diagonal $\{x=y\}, x \in [0,1], y \in [0,~1]$ and $\mu_{1,3}$
is the two-dimensional Lebesgue measure on $[0,~1]^2$. 

Since the projection of the set $\{x=y\}\times [0,~1]$ onto $0 \le x \le 1,~0 \le z \le 1$ along $y$ is a one-to-one mapping, there exist the unique measure on $[0,~1]^3$ with projections $\mu_{1,2}$, $\mu_{1,3}$. Denote this measure by $\pi$. 

In this construction $\mu_{2,3}$ was not used, so if $\mu_{2,3} \ne Pr_{2,3}(\pi)$, there exist no measure on $I$ with projections $\mu_{1,2}$, $\mu_{1,3}$ and $\mu_{2,3}$. But one can easily find $\mu_{2,3}$ different from $Pr_{2,3}(\pi)$ such that (2.1) holds.

\end{remark}

\begin{remark}
An important example of a non-empty set $\mathcal{P}_{\mu}$ is given by the following system of projections ( for the sake of simplicity $k=2$):
$$
\mu_{i,j} = \mu_i \times \mu_j, 
$$
where every $\mu_i$ is a probability measure  on $X_i$.
\end{remark}

{\bf Assumption II.}
It will be assumed throughout that  $\mathcal{P}$ is non-empty.

The proof of the following result is omitted because it is a simple repetition of the proof of the corresponding fact
for the standard Kantorovich problem (see \cite{BoKo}, \cite{Villani}).

\begin{theorem}
Assume that  $c$ is a lower semicontinuous function. Then there exists 
$P \in \mathcal{P}_{\mu}$ giving minimum to the functional $P \to \int c d P$ on  $\mathcal{P}_{\mu}$.
\end{theorem}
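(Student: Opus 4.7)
The plan is the standard direct method: show that the admissible set $\mathcal{P}_\mu$ is tight and weakly closed, then combine Prokhorov's theorem with weak lower semicontinuity of the cost functional applied to a minimizing sequence.

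First I would establish tightness of $\mathcal{P}_\mu$. By the consistency observation, every $P \in \mathcal{P}_\mu$ has one-dimensional marginals $\mu_i$ on each $X_i$ independent of $P$, namely $\mu_i = \mathrm{Pr}_i \mu_I$ for any $I \ni i$. Since each $X_i$ is Polish, $\mu_i$ is tight, so for every $\varepsilon>0$ there is a compact $K_i \subset X_i$ with $\mu_i(X_i\setminus K_i) < \varepsilon/n$. Then $K := K_1 \times \cdots \times K_n$ is compact in $X$, and for any $P \in \mathcal{P}_\mu$ a union bound gives $P(X\setminus K) \le \sum_{i=1}^n \mu_i(X_i\setminus K_i) < \varepsilon$. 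This shows $\mathcal{P}_\mu$ is uniformly tight, hence relatively compact in the weak topology by Prokhorov's theorem.

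Next I would verify that $\mathcal{P}_\mu$ is closed under weak convergence. If $P_m \to P$ weakly and $I \in \mathcal{I}_k$, then for every bounded continuous $f$ on $X_I$ the function $f \circ \mathrm{Pr}_I$ is bounded and continuous on $X$, so
\[
\int f \, d(\mathrm{Pr}_I P) = \int f\circ\mathrm{Pr}_I\, dP = \lim_m \int f\circ\mathrm{Pr}_I\, dP_m = \lim_m \int f\, d\mu_I = \int f\, d\mu_I.
\]
Thus $\mathrm{Pr}_I P = \mu_I$ for every $I$, which means $P \in \mathcal{P}_\mu$.

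Finally I would apply the direct method. Since $c \ge 0$ and $c$ is lower semicontinuous, the functional $P \mapsto \int c\, dP$ is weakly lower semicontinuous on the set of Borel probability measures on $X$ (a standard fact: approximate $c$ from below by an increasing sequence of bounded continuous nonnegative functions and use monotone convergence together with weak convergence of $P_m$). Let $\alpha := \inf_{P \in \mathcal{P}_\mu} \int c\, dP$ and pick a minimizing sequence $P_m \in \mathcal{P}_\mu$. By tightness and Prokhorov we extract a subsequence $P_{m_j} \to P$ weakly; by closedness $P \in \mathcal{P}_\mu$; and by lower semicontinuity $\int c\, dP \le \liminf_j \int c\, dP_{m_j} = \alpha$, so $P$ is a minimizer. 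There is no real obstacle here — the argument is entirely parallel to the standard existence theorem for the classical Kantorovich problem (as referenced in \cite{BoKo}, \cite{Villani}); the only thing to be careful about is the simultaneous preservation of all projections $\mathrm{Pr}_I P = \mu_I$ under weak limits, which is the content of the second step.
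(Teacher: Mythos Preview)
Your argument is correct and is precisely the standard direct-method proof the paper has in mind; indeed, the paper omits the proof entirely, noting that it is a simple repetition of the classical Kantorovich existence argument from \cite{BoKo}, \cite{Villani}. The only point worth flagging is that your use of well-defined one-dimensional marginals $\mu_i$ implicitly relies on Assumption~II ($\mathcal{P}_\mu \neq \emptyset$), which the paper imposes throughout.
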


\section{Duality}

In this section we prove a duality theorem for the multistochastic problem.
It can be deduced
from the following general minimax result   (see \cite{Villani}, Theorem 1.9)
in the same way as the duality theorem for the standard  Kantorovich problem.
The arguments are essentially the same, we repeat the proof for the reader convenience.
For the sake of simplicity we restrict ourselves to the case of compact spaces.

\begin{theorem}
\label{abstract-dual}
Let $E$ be a normed vector space and $E^*$ be the corresponding topologically  dual space. Consider convex functionals
$\Phi, \Psi$ on  $E$  with values in $\mathbb{R} \cup \{+ \infty\}$.
Let $\Phi^*, \Psi^*$ be their Legendre transforms. 
Assume that there exists a point  $z \in E$ satisfying
 $\Phi(z) < + \infty, \Psi(z) < + \infty$ and $\Phi$ is continuous at $z$.
Then
$$
\inf_{E} (\Phi + \Psi) = \max_{z \in E^*}  (-\Phi^*(-z) - \Psi^*(z) )
$$
\end{theorem}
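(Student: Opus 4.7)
The plan is to recognize this as the Fenchel--Rockafellar duality theorem and prove it via Hahn--Banach separation of convex sets in $E \times \mathbb{R}$, following the classical argument used for the standard Kantorovich duality.

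First, I would dispatch weak duality by a direct unwinding of the Legendre transform: for any $z \in E^*$ and any $x \in E$ one has $\Phi^*(-z) \ge \langle -z, x\rangle - \Phi(x)$ and $\Psi^*(z) \ge \langle z, x\rangle - \Psi(x)$, so summing yields
$$
-\Phi^*(-z) - \Psi^*(z) \le \Phi(x) + \Psi(x).
$$
Taking $\sup$ on the left and $\inf$ on the right gives $\sup_{z \in E^*}\bigl(-\Phi^*(-z)-\Psi^*(z)\bigr) \le \inf_E(\Phi + \Psi) =: m$. (If $m = -\infty$ the theorem is trivial; we may assume $m \in \mathbb{R}$.)

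For the non-trivial inequality I would consider, inside $E \times \mathbb{R}$, the convex sets
$$
A = \{(x,t) : \Phi(x) \le t\}, \qquad B = \{(x,t) : t \le m - \Psi(x)\}.
$$
By the definition of $m$, their interiors are disjoint (a common interior point would give $\Phi(x) + \Psi(x) < m$). The continuity of $\Phi$ at the point $z$ where both $\Phi(z)$ and $\Psi(z)$ are finite ensures that $A$ has non-empty interior, because a neighborhood of $(z,\Phi(z)+1)$ lies inside $A$. By the geometric Hahn--Banach theorem there exist $(\ell,\alpha) \in E^* \times \mathbb{R}$, not both zero, and $c \in \mathbb{R}$ with
$$
\langle \ell, x\rangle + \alpha t \ge c \quad \text{for } (x,t) \in A, \qquad \langle \ell, x\rangle + \alpha t \le c \quad \text{for } (x,t) \in B.
$$
Letting $t \to +\infty$ inside $A$ forces $\alpha \ge 0$; the case $\alpha = 0$ is excluded because then $\langle \ell, \cdot\rangle$ would separate the effective domains of $\Phi$ and $\Psi$, but these both contain $z$ in the interior of $\mathrm{dom}\,\Phi$ (by continuity at $z$), forcing $\ell = 0$, contradicting that $(\ell,\alpha)$ is non-zero. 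So we may normalize $\alpha = 1$.

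The first separation inequality applied to $t = \Phi(x)$ gives $\langle \ell, x\rangle + \Phi(x) \ge c$ for all $x$, i.e.\ $\Phi^*(-\ell) = \sup_x(-\langle \ell,x\rangle - \Phi(x)) \le -c$. The second, applied to $t = m - \Psi(x)$, gives $\langle \ell, x\rangle + m - \Psi(x) \le c$ for all $x$, i.e.\ $\Psi^*(\ell) \le c - m$. Adding,
$$
-\Phi^*(-\ell) - \Psi^*(\ell) \ge c - (c-m) = m,
$$
which combined with weak duality yields equality and shows that the supremum is attained at $z = \ell$. The main obstacle is the standard subtlety of ruling out $\alpha = 0$ (i.e.\ vertical separation), and this is precisely the point where the continuity hypothesis on $\Phi$ at $z$ is used; the rest of the argument is bookkeeping with Legendre transforms.
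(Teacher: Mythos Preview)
The paper does not prove this theorem: it is quoted verbatim from Villani (\cite{Villani}, Theorem~1.9) and then applied to obtain Theorem~\ref{dualth}. So there is no ``paper's own proof'' to compare against.

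Your argument is the standard Fenchel--Rockafellar proof via Hahn--Banach separation, and it is essentially correct. One small imprecision: what you actually need (and what your parenthetical justification really shows) is that $\operatorname{int}(A)\cap B=\emptyset$, not merely that the interiors are disjoint; this is the hypothesis of the geometric Hahn--Banach theorem when only one of the two convex sets is known to have non-empty interior. Indeed, if $(x,t)\in\operatorname{int}(A)$ then $\Phi(x)<t$, and $(x,t)\in B$ would give $\Phi(x)+\Psi(x)<m$, a contradiction. With that adjustment the separation step and the exclusion of the vertical case $\alpha=0$ go through exactly as you wrote, and the conclusion follows.
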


\begin{theorem}
\label{dualth}
Let $X_i$ be compact metric spaces and  $c \ge 0$ be a continuous function on $X$.
Then
$$
\min_{\pi \in \mathcal{P} } \int c d \pi = \sup\sum_{i \in \mathcal{I}_k} \int  f_{i} (x_{i_1}, \ldots, x_{i_k}) d \mu_{i}, 
 $$ 
 there the infimum is taken over the $k$-tuples  $i=(i_1, \ldots, i_k) \in \mathcal{I}_k$ and the functions $f_{i_1, \ldots, i_k} \in
 L_1(X_{i_1} \times \ldots \times X_{i_k}, \mu_{(i_1, \ldots, i_k)})$
satisfying
 $$
 \sum_{i \in \mathcal{I}_k} f_{i} (x_{i_1}, \ldots, x_{i_k}) \le c.
 $$
\end{theorem}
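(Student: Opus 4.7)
The plan is to apply the Fenchel--Rockafellar type minimax principle (Theorem \ref{abstract-dual}) to a carefully chosen pair of convex functionals on $E = C(X)$ endowed with the supremum norm. Since each $X_i$ is compact, $X = \prod_{i=1}^n X_i$ is compact as well, and the topological dual $E^*$ can be identified with the space $\mathcal{M}(X)$ of finite signed Radon measures on $X$.

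I would define $\Phi, \Psi \colon C(X) \to \mathbb{R} \cup \{+\infty\}$ by putting $\Phi(u) = 0$ when $u(x) \ge -c(x)$ for all $x \in X$ and $\Phi(u) = +\infty$ otherwise, and $\Psi(u) = -\sum_{I \in \mathcal{I}_k} \int_{X_I} f_I \, d\mu_I$ whenever $u$ admits a representation $u = -\sum_{I \in \mathcal{I}_k} f_I \circ {\rm Pr}_I$ with $f_I \in C(X_I)$, and $\Psi(u) = +\infty$ otherwise. A first technical point is that such a representation need not be unique, but $\Psi$ is nevertheless well defined: by Assumption~II there exists some $\pi_0 \in \mathcal{P}_\mu$, and if two admissible decompositions of the same $u$ are given, then integrating their difference against $\pi_0$ yields $\sum_I \int f_I^{(1)} d\mu_I = \sum_I \int f_I^{(2)} d\mu_I$. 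Both $\Phi$ and $\Psi$ are convex: $\Phi$ is the indicator of a convex cone, $\Psi$ is affine on its effective domain.

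From the definitions one reads off directly
$$\inf_{u \in E}\bigl(\Phi(u) + \Psi(u)\bigr) = -\sup\Bigl\{\sum_{I \in \mathcal{I}_k} \int f_I \, d\mu_I : f_I \in C(X_I),\ \sum_I f_I \le c \Bigr\},$$
since the condition $u = -\sum f_I \ge -c$ is equivalent to $\sum f_I \le c$. The Legendre transforms at $\pi \in \mathcal{M}(X)$ come out to be $\Phi^*(-\pi) = \int c\, d\pi$ when $\pi \ge 0$ and $+\infty$ otherwise (a positive part of $\pi$ allows $\langle -\pi, u\rangle$ to blow up along $u \ge -c$, whereas for $\pi \ge 0$ the supremum is attained at $u = -c$), and, by a standard linearity argument,
$$\Psi^*(\pi) = \sup_{f_I \in C(X_I)} \sum_{I \in \mathcal{I}_k} \int f_I \, d(\mu_I - {\rm Pr}_I \pi),$$
which equals $0$ when ${\rm Pr}_I \pi = \mu_I$ for every $I \in \mathcal{I}_k$ and $+\infty$ in every other case.

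The qualification hypothesis of Theorem \ref{abstract-dual} is verified at $u_0 \equiv -\|c\|_\infty - 1$: this constant lies in the sup-norm interior of $\{u \ge -c\}$, so $\Phi$ vanishes in a whole sup-norm ball around $u_0$ and is in particular continuous and finite there, while $u_0$ is representable as $-\sum f_I$ by putting the whole constant into a single $f_I$, which makes $\Psi(u_0)$ finite as well. The abstract theorem then yields
$$\min_{\pi \in \mathcal{P}_\mu} \int c \, d\pi = \sup\Bigl\{\sum_{I \in \mathcal{I}_k} \int f_I \, d\mu_I : f_I \in C(X_I),\ \sum_I f_I \le c \Bigr\},$$
after which the passage to $L^1(X_I, \mu_I)$ functions is routine: any admissible tuple $(f_I) \in \prod_I L^1$ satisfies $\sum_I \int f_I \, d\mu_I = \int (\sum_I f_I) \, d\pi \le \int c \, d\pi$ for every $\pi \in \mathcal{P}_\mu$, so the $L^1$-supremum cannot exceed the primal minimum, while the continuous supremum already attains it. The step I expect to need the most care is the well-definedness of $\Psi$ together with the identification of $\Psi^*(\pi)$ as being zero exactly on the feasible set; the sign bookkeeping in the Legendre transforms is error-prone but standard, and everything else is a direct transcription of Villani's classical proof to the $(n,k)$-stochastic setting.
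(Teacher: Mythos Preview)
Your proposal is correct and follows essentially the same route as the paper: the same choice of $E=C(X)$, the same indicator functional $\Phi$, and the same $\Psi$ (the paper writes it as $\Psi(u)=\int u\,d\pi_0$ for $u=\sum_I f_I$, which is exactly your definition after the sign substitution and makes well-definedness automatic). You are in fact more explicit than the paper on two points it glosses over---the verification of the qualification hypothesis and the passage from continuous to $L^1$ potentials---and the only blemish is the parenthetical explanation of $\Phi^*(-\pi)$, where it is a \emph{negative} part of $\pi$ (equivalently a positive part of $-\pi$) that forces the supremum to $+\infty$.
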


\begin{proof}
Let $E$ be the space of continuous functions on $X$.
 By Radon's theorem $E^*$ is the space of finite (signed) measures on $X$.

Set:
$$
\Phi(u) = 0,\ \mbox{\rm{if}} ~ \ u \ge -c
$$
and $\Phi(u)=+\infty$ in the opposite case.

Let  $\pi_0$ be a probability measure which belongs to $\mathcal{P}_{\mu}$. For every function $u$ which has representation $u = \sum_{i \in \mathcal{I}_k} f_{i}$
we set
$$
\Psi(u) = \int u d \pi_0 =  \sum_{i \in \mathcal{I}_k}\int  f_{i} d \mu_{i}, \ \ \mbox{if} ~ u = \sum_{i \in \mathcal{I}_k} f_{i},
$$
and $\Psi(u)=+\infty$ in the opposite case.
It is easy to check that the functionals satisfy assumptions of Theorem
\ref{abstract-dual}.

Clearly
$$
\inf_{u} \Bigl( \Phi(u) + \Psi(u) \Bigr) = -\sup_{ \sum_{i \in \mathcal{I}_k} f_{i} \le c}  \sum_{i \in \mathcal{I}_k}\int  f_{i} d \mu_{i}.
$$
Let us find the Legendre transform of the functionals
$$
\Phi^*(-\pi)  = \sup_{u} \Bigl( -\int u d \pi - \Phi(u) \Bigr) =  \sup_{u \ge -c} \Bigl( -\int u d \pi  \Bigr) = \sup_{u\le c} \int u d \pi.
 $$
It is easy to see that $\Phi^*(-\pi)  = \int c d \pi$, if $\pi$ is a non-negative measure. If not, then clearly $\Phi^*(-\pi)  = +\infty$.
 
Let us compute
 $
 \Psi^{*}(\pi)  =  \sup_{u} \Bigl( \int u d \pi - \Psi(u) \Bigr).
 $
Clearly $\Psi^{*}(\pi) = 0$, if ${\rm Pr}_{i}  \pi= \mu_{i}$ and $\Psi^{*}(\pi) = +\infty$
in the opposite case. This implies
$$
 \max_{z \in E^*}  (-\Phi^*(-\pi) - \Psi^*(\pi) )= -\min_{Pr_{i}  \pi= \mu_{i}  } \int c d \pi.
$$
The proof is complete.
\end{proof}

\section{Cyclical monotonicity}

Starting from this section we work with the following particular case:
$$
n=3, k=2,
$$
$$
c(x,y,z) = xyz.
$$ 
Here we consider the {\bf maximization} problem
$$
\int_{[0,1]^3} xyz d  \pi \to \max, \ \pi \in \Pi(\mu_{12}, \mu_{23}, \mu_{13}).
$$

This problem seems to be the simplest but important and illustrative particular case of the 
multistochastic Kantorovich problem.
The choice of the cost function is natural in view of the examples which will be given below.
In addition, it is analogous to the simplest
quadratic Kantorovich problem with one-dimensional marginals.
Indeed, the minimization of 
$\int |x-y|^2 d \pi$ on the set of measures $\Pi(\mu,\nu)$ with fixed marginals $\mu,\nu$
is equivalent to maximization of $\int xy d \pi$ on the same set.

 In the case of the standard Kantorovich problem with two marginals
 the well-known cyclical monotonicity property fully characterizes the solutions.
 In particular, if the marginals are one-dimensional, then the solution is concentrated on the graph 
 of a monotone function.
In this section we prove a weak analog of this property for our special multistochastic problem. Unlike the standard Kantorovich problem with one-dimensional marginals,
the geometric structure of the sets which are cyclically monotone in our sense is essentially less clear.

 To show cyclical monotonicity we follow approach from \cite{BeigJuill} (Lemma 1.11).

Assume we are given three finite sets 
$$
X = \{x_1, \ldots x_n\} \subset \mathbb{R}, \ 
Y = \{y_1, \ldots y_n\} \subset \mathbb{R}, \
Z = \{z_1, \ldots z_n\} \subset \mathbb{R}
$$
of cardinality $n$

In what follows we denote by 
$$U(X,Y,Z)
$$ the set of discrete probability measures on $X \times Y \times Z$ which have uniform projections onto 
$X \times Y, X \times Z, Y \times Z$.
Among of  measures from $U(X,Y,Z)$ let us consider a special important subclass of uniform distributions $\pi_{G}$ on the sets of the type
$$
G = (x,y, f(x,y)) , \  x \in X, \ y \in Y, \ f \colon X\times Y \to Z,
$$
where $f$ admits the following property:
fix any $z_i \in Z$, then for every  $x_j \in X$ there exists exactly one $y_k \in Y$ and 
 for every  $y_l \in Y$ there exists exactly one $x_m \in X$ such that
$$
z_i = f(x_i,y_k) = f(x_m,y_l).
$$
Then the uniform measure $\pi_{G}$ on $G$ belongs to $U(X,Y,Z)$.

The set 
$$
L(X,Y,Z)
$$ 
of such $G$ can be identified with $n \times n$ {\bf latin squares}.

\begin{remark}
Let us mention another important difference between the multistochastic and the standard Kantorovich problem.
By the classical theorem of Birkhoff every bistochastic matrix is a convex combination of the  permutation matrices.
In the multistochastic case there is no analog of the Birkhoff theorem:
not every multistochastic matrix is a convex combination of matrices with entries $(a_{ij})$ satisfying $a_{ij}=0$ or $a_{ij}=1.$
An example is given by 
$$
\includegraphics[scale = 0.7]{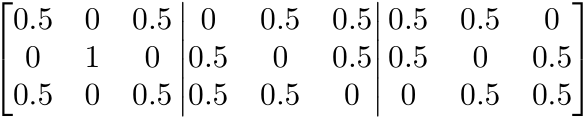}
$$						
See \cite{LL} for explanations and \cite{CLN} for  descriptions of extremal points.

\end{remark}

\begin{definition}
Let $\Gamma \subset \mathbb{R}^3$ be a set and $\pi_{\Gamma}$ be the uniform measure on $\Gamma$.
We say that $\Gamma'$ is the competitor of $\Gamma$ if $\pi_{\Gamma}$ and $\pi_{\Gamma'}$
have the same projections onto the principal hyperplanes
$$
Pr_{xy} \Gamma = Pr_{xy} \Gamma', \ Pr_{yz} \Gamma = Pr_{yz} \Gamma', \ Pr_{xz} \Gamma = Pr_{xz} \Gamma'. 
$$
\end{definition}

\begin{definition}
The set $\Gamma \subset \mathbb{R}^3$ is called cyclical monotone if for every natural $n$ and  $G \subset \Gamma$ of cardinality $n$
$$
\int xyz d \pi_{G'} \le  \int xyz d \pi_{G}
$$
for every competitor of $G$.
\end{definition}

\begin{theorem}
\label{cm}
Let $\pi$ be a solution to a $(3,2)$-multistochastic Kantorovich maximization problem with $c(x,y,z) = xyz$.
Then there exists a cyclical monotone set $\Gamma$ with $\pi(\Gamma)=1$.
\end{theorem}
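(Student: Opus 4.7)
The plan is to adapt the classical measurable-selection argument for cyclical monotonicity of supports of optimal plans, following the scheme of \cite{BeigJuill}, Lemma 1.11. I will argue by contradiction: assuming there is no full-measure cyclically monotone $\Gamma$, I extract by measurable selection a Borel family of bad $n$-tuples and average it into a signed perturbation of $\pi$ inside $\mathcal{P}_\mu$ which strictly raises $\int xyz\,d\pi$, contradicting optimality.

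First I would set up the selection. Call an $n$-tuple $\mathbf{p}=(p_1,\ldots,p_n)\in ([0,1]^3)^n$ \emph{bad} if there is an $n$-tuple $\mathbf{q}=(q_1,\ldots,q_n)$ such that the uniform atomic measures $\tfrac{1}{n}\sum\delta_{p_i}$ and $\tfrac{1}{n}\sum\delta_{q_i}$ have identical 2D projections and $\sum_i q_i^1q_i^2q_i^3 > \sum_i p_i^1p_i^2p_i^3$. The set $B_n$ of bad $n$-tuples is Borel, the set of witnessing pairs $(\mathbf{p},\mathbf{q})$ is analytic, and $B_n$ is invariant under coordinate permutations. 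I would distinguish two cases. If $\pi^{\otimes n}(B_n)=0$ for every $n$, a measure-theoretic exhaustion using the coordinate-symmetry of the $B_n$ produces a Borel $\Gamma\subset [0,1]^3$ with $\pi(\Gamma)=1$ and $\Gamma^n\cap B_n=\emptyset$ for all $n$, which is cyclically monotone. Otherwise $\pi^{\otimes n}(B_n)>0$ for some $n$; by the Jankov--von Neumann uniformization theorem I select a universally measurable map $\Phi\colon B_n\to ([0,1]^3)^n$ assigning to each bad $\mathbf{p}$ an improving competitor, and by restricting $B_n$ to a positive-measure subset I may arrange that the cost gain is uniformly at least some $\alpha>0$.

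Next I would construct the improving perturbation. Define the finite signed Borel measure
\[
\eta := \int_{B_n}\Big(\sum_{i=1}^n \delta_{\Phi_i(\mathbf{p})} - \sum_{i=1}^n \delta_{p_i}\Big)\, d\pi^{\otimes n}(\mathbf{p})
\]
on $[0,1]^3$. The competitor condition on $\Phi(\mathbf{p})$ gives $(\mathrm{Pr}_{ij})_*\eta=0$ for every pair $i<j$ and $\eta([0,1]^3)=0$. A Fubini bound gives $\eta^-(A)\le \int_{B_n}\#\{i:p_i\in A\}\,d\pi^{\otimes n} \le n\,\pi(A)$ for every Borel $A$, so for every $0<\varepsilon\le 1/n$ the measure $\pi_\varepsilon:=\pi+\varepsilon\eta$ is a nonnegative probability measure in $\mathcal{P}_\mu$, and
\[
\int xyz\,d\pi_\varepsilon - \int xyz\,d\pi \;=\; \varepsilon\int_{B_n}\Big[\sum_i \Phi_i(\mathbf{p})^1\Phi_i(\mathbf{p})^2\Phi_i(\mathbf{p})^3 - \sum_i p_i^1p_i^2p_i^3\Big]d\pi^{\otimes n} \;\ge\; \varepsilon\alpha\,\pi^{\otimes n}(B_n)\;>\;0,
\]
contradicting the optimality of $\pi$.

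The delicate point will be the first case of the dichotomy in Step~1: showing that $\pi^{\otimes n}(B_n)=0$ for all $n$ forces the existence of a full-measure cyclically monotone $\Gamma$. The competitor condition demands equality of 2D projection multisets, so a nontrivial bad tuple must have coincidences among the coordinates of its entries; consequently $B_n$ sits in a countable union of lower-dimensional subvarieties of $([0,1]^3)^n$, and a single bad tuple from the support cannot generally be perturbed to a family of bad tuples of positive product measure. The exhaustion to build $\Gamma$ has to proceed through iterated slicing of $B_n$ by hyperplanes of coordinate-coincidence, using the coordinate-symmetry of $B_n$ and the analytic structure of the competitor relation. The measurable selection and the positivity bound $\eta^-\le n\pi$ in Steps~1 and 2 are routine once this reduction is settled.
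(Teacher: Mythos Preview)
Your overall strategy matches the paper's: both follow the Beiglb\"ock--Juillet scheme and derive a contradiction by building a strictly improving competitor from a measurable family of bad $n$-tuples. Your Case~B argument (construct $\eta$, check that $\pi+\varepsilon\eta\in\mathcal P_\mu$ and has larger cost) is correct and is essentially the paper's treatment of its alternative~(2).

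The gap is in Case~A. The dichotomy you set up is too weak: from $\pi^{\otimes n}(B_n)=0$ alone you cannot in general extract a Borel $\Gamma$ of full $\pi$-measure with $\Gamma^n\cap B_n=\emptyset$. A symmetric Borel set $B\subset X^2$ can satisfy $\pi^{\otimes 2}(B)=0$ without being covered by any union $\{p_1\in N_1\}\cup\{p_2\in N_2\}$ with $\pi(N_i)=0$; take $B=\{(x,y):x+y=1\}\subset[0,1]^2$ with $\pi$ Lebesgue, where any full-measure $\Gamma$ must meet $1-\Gamma$. Your observation that a bad tuple must have coordinate coincidences is correct (if all $x_i$, all $y_i$, all $z_i$ are distinct, the only competitors are relabellings), but it only places $B_n$ inside a union of coincidence hyperplanes $\{p_i^d=p_j^d\}$, which are precisely the kind of diagonal null sets that a product-measure-zero condition cannot peel off into one-factor null sets; the ``iterated slicing'' you sketch does not close this.

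The paper fills the gap by invoking Kellerer's marginal-problem dichotomy with all $n$ marginals equal to $\pi$: either $M_n\subset\bigcup_{i=1}^n \{p_i\in M_n^i\}$ with $\pi(M_n^i)=0$, or there exists a nonzero measure $\gamma$ concentrated on $M_n$ with $\mathrm{Pr}_i\gamma\le\pi$ for every $i$. The first alternative immediately yields $\Gamma=\bigcap_n\bigcap_i(\mathbb R^3\setminus M_n^i)$; the second is handled exactly as in your Case~B, with $\gamma$ playing the role of your $\pi^{\otimes n}|_{B_n}$. So the missing ingredient is precisely Kellerer's theorem; once you replace your product-measure dichotomy by Kellerer's, your argument goes through.
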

\begin{proof}
For every $n$ let 
\begin{align*}
M_{n}  = & \Bigl\{ G  \subset \mathbb{R}^3, {\rm {card}}(G)=n,\  \text{there exists a competitor $G'$ such that} \
\\& \int xyz d \pi_G < \int xyz d \pi_{G'}  \Bigr\} 
\subset (\mathbb{R}^3)^{n}.
\end{align*}
According to a well-known result of H.~Kellerer (see \cite{Kell}) one of the following two options holds:

(1) $M_n$ is contained in a set of the type $\cup_{i=1}^{n} \mathbb{R}^3 \times \ldots \times M^{i}_n \times \ldots \times \mathbb{R}^3$ with $\pi(M^i_n)=0$.

(2) There exists a measure $\gamma$ on $M_n$ such that $\gamma(M_n)>0$  and ${\rm Pr}_i (\gamma) \le \pi$ for every $i$.

We will show that (2) is impossible. Thus (1) holds for every $n$ and $$\Gamma = \cap_{i=1}^{n} \mathbb{R}^2 \setminus \cup_{i=1}^{n} M^i_n$$ is the desired set.

Assume that (2) holds for some $n$. Without loss of generality let us assume that $\gamma \le \frac{1}{n} \pi$.
Set $\gamma' = \sum_{i=1}^{n} {\rm Pr}_i \gamma$. Clearly
$$
\gamma'  = \int \pi_{G} d \gamma(G).
$$
By definition of $M_n$ for $\gamma$-a.e. $G$ there exists a competitor  $G'$  such that
and $\int xyz d \pi_{G} < \int xyz d \pi_{G'}$. Moreover, using linear programming algorithms one can make the correspondence
$G \to G'$ measurable.
Define 
$$
\tilde{\gamma} = \int \pi_{G'} d \gamma(G)
$$
Clearly $\int xyz d \tilde{\gamma} > \int xyz d {\gamma'}$ and $\gamma', \tilde{\gamma}$ have the same projection onto the principal
hyperplanes. Then we set $\pi' = \pi - \gamma' + \tilde{\gamma}$. The measures $\pi, \pi'$ have the same projections  onto the principal
hyperplanes and $\pi'$ has a larger total cost. We obtain a contradiction.
\end{proof}

\begin{example}
\label{4p}
The simplest example of a set $G$ which belongs to some $L(X,Y,Z)$ is given by the following four-points set
with uniform projections on the products of some two-points sets
$$
 X_1 = (a_1, b_1, c_2),  X_2 = (a_1, b_2, c_1), X_3 =(a_2, b_1, c_1), X_4 =(a_2, b_2, c_2).
 $$
 The set $G$ is cyclically monotone for $c=xyz$
 if and only if 
 $$
  (a_1-a_2) (b_1 - b_2)(c_1-c_2) \le 0.
  $$
\end{example}

The well-known and by now classical result of Y.~Brenier establishes 
existence of the so-called optimal transportation mapping in the classical setting.
We don't know whether the multistochastic Kantorovich problem admits the same property.
However, applying the cyclical monotonicity property proved in Proposition \ref{cm}
we are able to show a weak version of the Brenier theorem saying that under natural assumptions
$\pi$ is a singular measure.

Let us denote by $\lambda^n$ the standard $n$-dimensional Lebesgue measure.

\begin{lemma}
\label{cube}
Let  $A \subset \mathbb{R}^3$  be a Borel set of positive Lebesgue measure.
There exist numbers 
$$
x_1 < x_2, y_1 < y_2, z_1 < z_2,
$$
such that  $\{x_1, x_2\} \times \{y_1, y_2\} \times \{z_1, z_2\} \subset A$.
\end{lemma}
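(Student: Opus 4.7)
The plan is to deduce the statement from the Lebesgue density theorem combined with a twofold application of Fubini's theorem. By the density theorem, almost every point of $A$ is a density point, so for any $\epsilon > 0$ (to be fixed later) one can choose a cube $B = I \times J \times K$, with $I, J, K$ intervals of common length $2r$, such that $\lambda^3(A \cap B) > (1 - \epsilon)\lambda^3(B)$. Any combinatorial cube we produce inside $A \cap B$ will automatically lie in $A$.

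The second step is to concentrate the missing mass via two Chebyshev-type estimates. Define $f(x,y) := \lambda^1(A_{x,y} \cap K)$, where $A_{x,y} = \{z : (x,y,z)\in A\}$. Fubini applied to $A \cap B$ gives $\int_{I\times J} f\,dx\,dy > (1 - \epsilon)(2r)^3$, while $f \le 2r$ pointwise. A standard Chebyshev argument then yields
$$
\lambda^2(S) > (1 - \sqrt{\epsilon})(2r)^2, \qquad S := \bigl\{(x,y) \in I \times J : f(x,y) > (1-\sqrt{\epsilon}) \cdot 2r\bigr\}.
$$
Applying the same reasoning to the indicator of $S$ along the $y$-direction produces a set $T \subset I$ with $\lambda^1(T) > (1 - \epsilon^{1/4}) \cdot 2r$ such that, for every $x \in T$, the slice $S_x := \{y \in J : (x,y) \in S\}$ has $\lambda^1(S_x) > (1 - \epsilon^{1/4}) \cdot 2r$.

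Finally, fix $\epsilon$ small enough (e.g. $\epsilon < 1/16$) that $2\epsilon^{1/4} < 1$ and $4\sqrt{\epsilon} < 1$. Pick any two points $x_1 < x_2$ in $T$. Since $S_{x_1}$ and $S_{x_2}$ each occupy more than a $(1 - \epsilon^{1/4})$-fraction of $J$, their intersection has positive $\lambda^1$-measure, so we can pick $y_1 < y_2$ in it, and then $(x_i, y_j) \in S$ for all four pairs $(i,j)$. By definition of $S$, each of the four fibers $A_{x_i, y_j} \cap K$ has $\lambda^1$-measure greater than $(1-\sqrt{\epsilon}) \cdot 2r$, hence their fourfold intersection has measure at least $(1 - 4\sqrt{\epsilon}) \cdot 2r > 0$ and contains two distinct points $z_1 < z_2$. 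The resulting eight points $\{x_1,x_2\} \times \{y_1,y_2\} \times \{z_1,z_2\}$ all lie in $A \cap B \subset A$, as required. There is no conceptual obstacle; the only care needed is in the $\epsilon$-bookkeeping through the two Chebyshev steps and the final fourfold intersection.
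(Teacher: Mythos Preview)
Your proof is correct; the $\epsilon$-bookkeeping checks out (with $\epsilon < 1/16$ both $2\epsilon^{1/4} < 1$ and $4\sqrt{\epsilon} < 1$ hold, so every intersection you form has positive measure and hence contains two distinct points).

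Your route, however, is genuinely different from the paper's. The paper argues by iterated dimension reduction using only Fubini: it first finds $z_1 < z_2$ such that the two horizontal slices $A^{z_1}, A^{z_2} \subset \mathbb{R}^2$ intersect in a set $B$ of positive $\lambda^2$-measure (this step is asserted without detail; it follows for instance from $\int \lambda^2(A^{z_1}\cap A^{z_2})\,dz_1\,dz_2 = \int \bigl(\lambda^1\{z:(x,y,z)\in A\}\bigr)^2\,dx\,dy > 0$), and then repeats the same trick on $B$ to produce $y_1 < y_2$ and finally $x_1 < x_2$. You instead localize via the Lebesgue density theorem to a small cube where $A$ is nearly full, and then two Chebyshev passes give uniform lower bounds on the fiber measures so that the needed intersections are forced by simple inclusion--exclusion. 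The paper's version is shorter and avoids the density theorem; yours is more quantitative, entirely self-contained (no hidden ``two slices with positive overlap'' step), and the template extends verbatim to $\mathbb{R}^n$.
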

\begin{proof}
Without loss of generality let us consider bounded sets.
By Fubini's theorem one gets that for every $\varepsilon>0$ 
the set  $A_z$ of numbers $\tilde{z}$ satisfying
$$
\lambda^2(A \cap \{z = \tilde{z} \} ) > \varepsilon
 $$ 
 has a non-zero Lebesgue measure. Hence there exists two points 
 $z_1, z_2 \in A_z$, such that the projections $A \cap \{z = z_1\}$,  $A \cap \{z = z_2\}$
 onto the hyperplane $xy$ have an intersection $B$ of a positive measure. Hence
 $B \times \{z_1,z_2\} \subset A$. Next we apply the same arguments to the one-dimensional sections of $B$:
 $\{y=\tilde{y} \} \cap B$. This completes the proof.
\end{proof}

\begin{corollary}
\label{singular}
Every cyclical monotone set $\Gamma \subset \mathbb{R}^3$ satisfies  $\lambda^3(\Gamma)=0$.
\end{corollary}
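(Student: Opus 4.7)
The plan is to argue by contradiction using the preceding cube lemma and the four-point example. Suppose $\lambda^3(\Gamma) > 0$. By Lemma \ref{cube}, there exist numbers $x_1 < x_2$, $y_1 < y_2$, $z_1 < z_2$ such that all eight vertices of the box $\{x_1,x_2\} \times \{y_1,y_2\} \times \{z_1,z_2\}$ lie in $\Gamma$. The key observation is that these eight vertices split into two ``tetrahedral'' four-point configurations
$$ A = \{(x_1,y_1,z_1),\, (x_1,y_2,z_2),\, (x_2,y_1,z_2),\, (x_2,y_2,z_1)\}, $$
$$ B = \{(x_2,y_1,z_1),\, (x_1,y_2,z_1),\, (x_1,y_1,z_2),\, (x_2,y_2,z_2)\}, $$
each of which is exactly of the type appearing in Example \ref{4p}.

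Next I would verify that $A$ and $B$ share the same pairwise projections: the $xy$-projection of each is $\{x_1,x_2\}\times\{y_1,y_2\}$, and similarly for the $xz$- and $yz$-projections. Hence $B$ is a competitor of $A$ (and vice versa) in the sense of the Definition preceding Theorem \ref{cm}. A direct computation then gives
$$ \int xyz\, d\pi_B - \int xyz\, d\pi_A \;=\; \tfrac{1}{4}(x_2-x_1)(y_2-y_1)(z_2-z_1) \;>\; 0, $$
which can also be read off from Example \ref{4p}, since the configuration $A$ corresponds to the ``bad'' sign in the criterion $(a_1-a_2)(b_1-b_2)(c_1-c_2)\le 0$ after relabelling.

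Therefore $A \subset \Gamma$ admits a competitor $B$ with strictly larger $c$-integral, which directly contradicts the cyclical monotonicity of $\Gamma$ (taking $G=A$ and $G'=B$ of cardinality four in the Definition). The assumption $\lambda^3(\Gamma)>0$ is thus untenable, so $\lambda^3(\Gamma)=0$.

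There is essentially no serious obstacle once Lemma \ref{cube} and Example \ref{4p} are in hand: the proof is a combinatorial matching exercise followed by a short expansion. The only point requiring minor care is to identify the two complementary tetrahedra inside the cube and to check that they indeed share all three two-dimensional projections, which is the content of the elementary identity $\{0,1\}^3 = A \sqcup B$ where $A$ and $B$ are the parity classes of the sum of coordinates.
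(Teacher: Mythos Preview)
Your proof is correct and follows exactly the same approach as the paper's own proof, which simply reads: ``Assume that $\lambda^3(\Gamma)>0$. Then according to Lemma \ref{cube} there exist numbers $x_1 < x_2$, $y_1 < y_2$, $z_1 < z_2$ such that $\{x_1, x_2\} \times \{y_1, y_2\} \times \{z_1, z_2\} \subset \Gamma$. We get a contradiction with Example \ref{4p}.'' You have merely unpacked the details that the paper leaves implicit---identifying the two complementary tetrahedra, checking their projections match, and computing the cost difference---and your relabelling and sign check are accurate.
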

\begin{proof}
Assume that $\lambda^3(\Gamma)>0$. Then according to Lemma \ref{cube} there exist numbers $x_1 < x_2, y_1 < y_2, z_1 < z_2,
$
such that  $\{x_1, x_2\} \times \{y_1, y_2\} \times \{z_1, z_2\} \subset \Gamma$. We get a contradiction with
Example \ref{4p}.
\end{proof}

  \section{Main example. Primal problem.}

 In this  section we consider our main example: $(3,2)$-Kantorovich problem
 on the unit three-dimensional cube  $[0,1]^3$,  where the projections onto principal hyperplanes
 are equal to two-dimensional Lebesgue measure $\lambda^2$. The  cost function is given by
 $$
 c(x,y,z) = xyz.
 $$
 The set of measures with such projections will be denoted by $\mathcal{P}_{\lambda}$.
We are looking for 
 \begin{equation}
 \label{3-2-leb}
\max_{P \in \mathcal{P}_{\lambda}} \int xyz \ dP.
 \end{equation}
 In this concerete example we are able to find an explicit solution. We emphasize that this is possible because the 
 problem admits many symmetries. We don't know whether the problem has an explicit solution even after
 slight changes, for instance, in the case when the projections are equal to products $\mu_i \times \mu_j$,
 where $\{\mu_i\}$ are fixed one-dimensional distributions.

 We denote by $\oplus$ the bitwise addition (xor). Given two couples of numbers
 $x, y \in [0,1]$ we consider their diadic decompositions 
 $$
 x =\overline{0, x_1 x_2 x_3 \ldots}, \  y = \overline{0, y_1 y_2 y_3 \ldots}, \ \ x_i, y_i \in \{0,1\}.
 $$
 Then the xor operation is defined as follows:
 $$
 x \oplus y =  \overline{ 0, x_1 \oplus y_1 \  x_2 \oplus y_2  \ x_3 \oplus y_3 \ldots},
 $$
 where $0 \oplus 0 = 1 \oplus 1 =0, 0 \oplus 1 = 1 \oplus 0=1$.
 
 \begin{remark}
 The addition is not well-defined for dyadic rational numbers, because they can be written in two different ways.
 We agree that every dyadic rational number less then $1$  has a finite numbers of units in its decomposition. The number $x=1$ will be always decomposed in the following way:
 $$
 1 =  \overline{ 0, 11111 \ldots}.
 $$
 Thus
 $$
 x \xor 1 = 1 -x.
 $$
 This operation is continuous up to a countable set of dyadic numbers. 
 \end{remark}

 \begin{theorem}
 The image $\pi$ of  two-dimensional Lebesgue  measure $\lambda(dx) \times \lambda(dy)$
 under the mapping $$T : (x,y) 
 \to (x,y, 1-x\oplus y)$$
 is a solution to problem (\ref{3-2-leb}).
 
 If instead of maximizing the total cost function one asks for 
 $$
\min_{P \in \mathcal{P}_{\lambda}} \int xyz \ dP,
$$
 then the corresponding mapping $T$ is given by
 $$T : (x,y) 
 \to (x,y, x\oplus y).$$
 \end{theorem}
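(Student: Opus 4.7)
The plan is to combine a feasibility check, a reflection symmetry reducing the maximization problem to the minimization problem, and an explicit dual certificate. First I would verify $\pi \in \mathcal{P}_\lambda$. Identifying $[0,1]$ with $\mathbb{Z}_2^{\infty}$ via dyadic expansion, $\oplus$ becomes the Polish group operation and $\lambda$ its Haar measure, so for each fixed $x$ the map $y \mapsto x \oplus y$ is a measure-preserving involution of $([0,1],\lambda)$; hence so is $y \mapsto 1 - x \oplus y$. This gives $\mathrm{Pr}_{xz} \pi = \lambda^2$, the argument is symmetric in $(x,y)$ for $\mathrm{Pr}_{yz}\pi$, and $\mathrm{Pr}_{xy}\pi = \lambda^2$ is immediate.

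Next I would reduce the two cases to one via the reflection $\rho(x,y,z) = (1-x,1-y,1-z)$, which sends $\mathcal{P}_\lambda$ bijectively to itself. Because the pairwise projections of any $P\in\mathcal P_\lambda$ are Lebesgue, expanding $(1-x)(1-y)(1-z)$ yields
\[
\int xyz\, d(\rho_{\#}P) = \int (1-x)(1-y)(1-z)\, dP = \tfrac{1}{4} - \int xyz\, dP.
\]
Thus maximizing and minimizing $\int xyz$ on $\mathcal P_\lambda$ are equivalent, with extremizers related by $\rho$. Using $1 \oplus a = 1 - a$ (which holds off a countable set), the associativity of $\oplus$ shows that $\rho$ maps $\{z = x \oplus y\}$ onto $\{z = 1 - x \oplus y\}$, so the claimed maximizer is the $\rho$-image of the claimed minimizer and it suffices to prove optimality in the minimization case.

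For the minimization problem I would use the function $F$ stated in the introduction and establish the pointwise inequality
\[
F(x,y) + F(x,z) + F(y,z) \le xyz \qquad\text{on } [0,1]^3,
\]
with equality exactly on $\{z = x \oplus y\}$. Granting this, weak duality (Theorem \ref{dualth}) gives $\int xyz\, dP \ge 3\int F\, d\lambda^2$ for every $P \in \mathcal{P}_\lambda$, while for the specific $\pi$ the inequality becomes equality because $\pi$ is concentrated on $\{z = x \oplus y\}$. Hence $\pi$ is optimal and $F$ simultaneously solves the dual.

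The crux of the argument is the pointwise inequality above, and this is where I expect the main obstacle to lie. The natural approach is to exploit the $\oplus$-self-similarity of the problem: split $[0,1]^3$ into the eight dyadic sub-cubes $I_\alpha = \prod_i [\alpha_i/2,(\alpha_i+1)/2]$ for $\alpha \in \{0,1\}^3$, and analyze $G(x,y,z) = xyz - F(x,y) - F(x,z) - F(y,z)$ on each of them. On the four ``good'' sub-cubes with $\alpha_1 \oplus \alpha_2 \oplus \alpha_3 = 0$ (the ones meeting the Sierpi\'nski tetrahedron) one expects $G$ to reduce, up to affine corrections built from the single-variable integrals in $F$, to a rescaled copy of itself, yielding a self-similar recursion that propagates nonnegativity and equality; on the four ``bad'' sub-cubes a direct positivity estimate should suffice. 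Verifying this self-affine structure carefully—and dealing with the mild irregularity of $F$ coming from the countable ambiguity of dyadic expansions—is the analytic heart of the argument and is the step I would expect to take the most work.
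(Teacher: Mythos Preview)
Your proposal is correct but follows a genuinely different route from the paper's proof of this theorem. The paper argues entirely on the primal side: it symmetrizes an arbitrary maximizer under the three involutions $T_{xy},T_{xz},T_{yz}$ (each of which preserves both $\mathcal P_\lambda$ and the cost), splits $[0,1]^3$ into the two unions $I_1,I_2$ of four half-scale cubes, and shows by a direct sign computation that any $T_{xy},T_{xz},T_{yz}$-invariant mass on the ``bad'' half $I_2$ can be reflected via the single-coordinate flip $T_x$ to strictly increase the cost. Hence a symmetrized optimizer lives on $I_1$; iterating at all dyadic scales forces the support into the Sierpi\'nski tetrahedron. No dual potential appears anywhere in this argument, and the explicit $F$ is only constructed afterward, in Sections~6--7, by an independent recursive definition.

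Your approach, by contrast, bypasses the iterative primal construction entirely and goes straight to the dual certificate $F$, reducing optimality to the pointwise inequality $F(x,y)+F(x,z)+F(y,z)\le xyz$ together with equality on $\{z=x\oplus y\}$. This is valid, and your reflection $\rho$ relating the max and min problems is a clean substitute for the paper's separate treatment. The trade-off is that what you identify as ``the analytic heart of the argument'' is essentially the entire content of Sections~6--7: there the inequality is established not by analysing $G$ on sub-cubes directly, but by first defining an integer-valued function $f$ on $\mathbb N_0^2$ via a four-case recursion, proving $f(a,b)+f(b,c)+f(c,a)\le 8abc$ by a parity-tracking induction (with the key observation that strict inequality forces a gap of at least $2$), extending $f$ to a continuous $f_C$ via Lipschitz estimates, and only then identifying $f_C$ with the integral formula for $F$. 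Your self-similarity sketch points in the right direction, but the actual mechanism the paper uses is discrete induction rather than a continuous functional recursion on $G$; you should be prepared for that detour if you pursue this line.
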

 
 \begin{remark}
 We don't know whether  this concrete problem  and the problem in general setting (for an appropriate cost function) has unique solution.
 In this example there exists a corresponding optimal mapping, 
 but we don't know whether the same is true for any $(3,2)$-problem (under appropriate assumptions 
 on the projections).
 \end{remark}
 
 \begin{proof}
 Let us consider the following transformations of $[0,1]^3$
 $$
 T_{xy}(x,y,z)  = (1-x,1-y,z),
 $$
 $$  
 T_{xz}(x,y,z)  = (1-x,y,1-z),  
 $$
 $$
  T_{yz}(x,y,z)  = (x,1-y,1-z).
 $$
 All these transformations push forward arbitrary measure $\mu \in \mathcal{P}_{\lambda}$ onto a measure from $\mathcal{P}_{\lambda}$. We define
 $$
 \mu^{xy} = \mu \circ T^{-1}_{xy}, \ \mu^{xz} = \mu \circ T^{-1}_{xz}, \ \mu^{yz} = \mu \circ T^{-1}_{yz}.
 $$
 
 Next we note that every $\mu \in \mathcal{P}_{\lambda}$ satisfies
 \begin{align*}
 \int xyz d \mu^{xy} & = \int (z - xz - yz + xyz) d \mu \\& = \int xyz d \mu + \int_{0}^1 z dz 
 - \int_{0}^1 \int_{0}^{1} xz dx dz  - \int_{0}^1 \int_{0}^{1} yz dy dz = \int xyz d \mu .
 \end{align*}
Thus the total cost $\int xyz d \mu$ is invariant with respect to $T^{xy}$
 (and with respect to $T^{yz}, T^{xz}$).
 Hence it follows that for every $\tilde{\pi}$  solving (\ref{3-2-leb}) the measures $\tilde{\pi}^{xy}, \tilde{\pi}^{yz}, \tilde{\pi}^{xz}$,  and
 $$
{\pi}_1= \frac{\tilde{\pi} + \tilde{\pi}^{xy} + \tilde{\pi}^{xz} + \tilde{\pi}^{yz}}{4}
 $$
 are solutions to problem (\ref{3-2-leb}) as well. Note that   ${\pi}_1$
 is invariant with respect to   $T^{xy}$, $T^{yz}$, $ T^{xz}$. This follows from the relations
$$
T^{xy} T^{xz}  = T^{xz} T^{xy}   = T^{yz} , \  T^{xy} T^{xy} = \rm{Id}.
$$
 
Next we decompose  $[0,1]^3$ into sets   $I_1$, $I_2$. Every $I_i$, $i \in \{1,2\}$
 is a union of four smaller cubes of volume $1/2^3$:
$$
I_1 = \overline{[0,1]^3 \setminus I_2}
$$

$$
I_2 = \Bigl[0, \frac{1}{2}\Bigr]^3 \bigcup \Bigl( \Bigl[\frac{1}{2},1 \Bigr]^2 \times \Bigl[0, \frac{1}{2}\Bigr] \Bigr) \bigcup
\Bigl( \Bigl[0,\frac{1}{2}\Bigr] \times \Bigl[ \frac{1}{2},1\Bigr]^2 \Bigr)
\bigcup \Bigl(  \Bigl[\frac{1}{2},1 \Bigr] \times  \Bigl[0, \frac{1}{2} \Bigr] \times  \Bigl[ \frac{1}{2},1\Bigr] \Bigr).
$$
 
Since every set $I_1, I_2$  is invariant under  $T_{xy}$, $T_{yz}$, $ T_{xz}$, the measures  
$$\pi_{I_1} = (\pi_1)|_{I_1}, \  \pi_{I_2} = (\pi_1)|_{I_2}$$
are invariant as well.
Hence the push-forward image 
 $$
 \pi^x_{I_2} =  \pi_{I_2} \circ T_x^{-1}
 $$
 of measure
$
 \pi_{I_2}
 $
 with respect to  $T_x \colon (x,y,z) \mapsto (1-x,y,z)$
 has the same hyperplane projections as $\pi_{I_2}$.
Thus
 $$
\pi_{I_1} +  \pi^x_{I_2}
 $$
 belongs to  $\mathcal{P}_{\lambda}$.
 
Let us show that $\pi_{I_2}=0$. To this end it is sufficient to show that 
 $$
 \int  xyz d \mu <  \int  xyz d \hat{\mu}, 
 $$
 where $\hat{\mu} = \mu \circ (T^x)^{-1}$,
 for every non-zero measure  $\mu$, which is invariant with respect to  $T^{xy}$, $T^{yz}$, $ T^{xz}$,
  and sastisfies $\rm{supp}(\mu) \subset I_2$. 
Indeed, if we show this, then we get
 $$
\int xyz d\pi_{I_1} + \int xyz d \pi^x_{I_2} > \int xyz d\pi_{I_1} + \int xyz d \pi_{I_2}.
 $$
The latter implies that measure 
$
\pi_{I_1} +  \pi^x_{I_2}
$ gives better value to the total cost function.
 
 Let $\nu$ be the projections of  $\mu$ (hence, projections of $\hat{\mu}$)
 onto $x$-axis, and $\eta^{x}(dy,dz)$,  $\hat{\eta}^{x}(dy,dz)$  are corresponding conditional measures
 $$
 \mu = \nu(dx) \eta^{x}(dydz), 
 $$
 $$
 \hat{\mu} = \nu(dx) \hat{\eta}^{x}(dy dz).
 $$
 Note that
 $\eta$ is invariant with respect to  $T^{yz}$  and
 \begin{align}
 \label{xy-eta}\hat{\eta}^x = \eta^x \circ T^{-1}_{y} =  \eta^x \circ T^{-1}_{z} = \eta^{1-x}  = \hat{\eta}^{1-x} \circ T^{-1}_{z}.
 \end{align}
 Hence
 \begin{align*}
 \int xyz (d\mu - d \hat{\mu}) & = \int \Big(\int yz (d \eta^x - d \hat{\eta}^x) \Big) x  \nu(dx)
 \\& =  \int_{0}^{\frac{1}{2}} \Big(\int yz (d \eta^x - d \hat{\eta}^x) \Big) x  \nu(dx)
 +  \int_{\frac{1}{2}}^1 \Big(\int yz (d \eta^x - d \hat{\eta}^x) \Big) x  \nu(dx)
 \\&
 = 
  \int_{\frac{1}{2}}^1 \Big(\int yz (d \eta^x - d \hat{\eta}^x) \Big) (2x-1)  \nu(dx).
 \end{align*}
 Next, using $T^{zy}$-invariance of  $\eta$ and (\ref{xy-eta}), one gets
 \begin{align*}
 \int yz & (d \eta^x - d \hat{\eta}^x) 
 = \frac{1}{2} \Bigl( \int (yz + (1-z)(1-y)) (d \eta^x - d \hat{\eta}^x) \Bigr)
 \\&
 =  \frac{1}{2} \int \big(yz + (1-z)(1-y) - (1-y)z - y(1-z)\big) d \eta ^x
 =  \frac{1}{2} \int (2y-1)(2z-1) d \eta^x. 
 \end{align*}
 Finally,
 
 \begin{align*}
 \int xyz (d\mu - d \hat{\mu}) 
 = 
 \frac{1}{2} \int_{\frac{1}{2}}^1 \bigr[\int (2y-1)(2z-1) d \eta^x(dzdy) \bigl] (2x-1)  \nu(dx).
 \end{align*}
Since the support of  $\mu$ lies in $I_2$, one gets 
$\int xyz (d\mu - d \hat{\mu})  < 0$.

Thus we get that the support of $\pi_1$  belongs to the union of four disjoint cubes with volume $1/2^3$
$$
J_1 = I_1 = C_1 \cup C_2 \cup C_3 \cup C_4.
$$
Hence the restriction of  $\pi_1$ onto every cube  $C_i$ is a solution of (\ref{msK})
for the same cost function with marginals which are restrictions of Lebesgue measure on projections of correspoding
$C_i$. Hence the same arguments are applicable  to every  $C_i$  and one gets 
a solution $\pi_2$ supported on a union of  16 cubes of volume $1/4^3$ 
$$
J_2 = \cup_{i=1}^4 \cup_{j=1}^4 C_{ij}.
$$
Reapeating this argument one gets a sequence of decreasing sets $J_n$ such that  each of them contains support of a measure
$\pi_n$ which solves (\ref{msK}). 
Clearly, the sequence $\{\pi_n\}$ admits a weak limit $\pi$ supported on
$$
J=\cap_{n=1}^{\infty} J_n.
$$
We get immediately that  $\pi$ solves the desired problem, moreover
 $J$ is a graph of  $T(x,y)$ (up to a set which projection on  $xy$ has zero measure) and $\pi$ is the unique measure
 supported on $J$ with the desired projections.
\end{proof}

The following pictures represent the iteration porocedure.

\includegraphics[scale = 0.3]{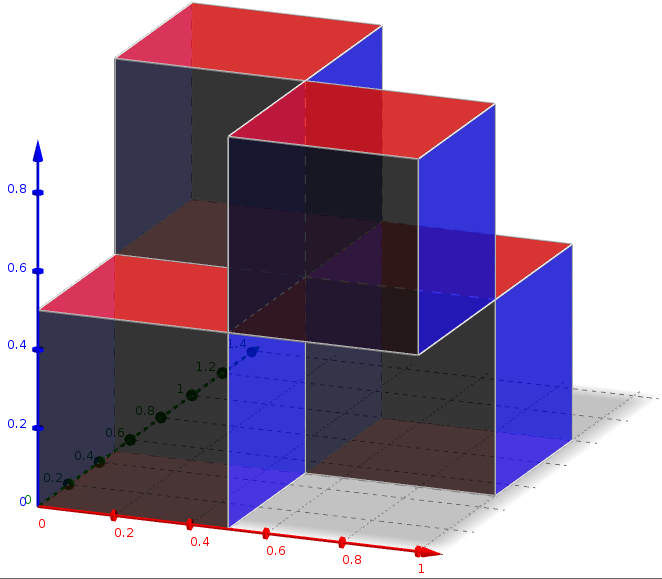}
\includegraphics[scale = 0.3]{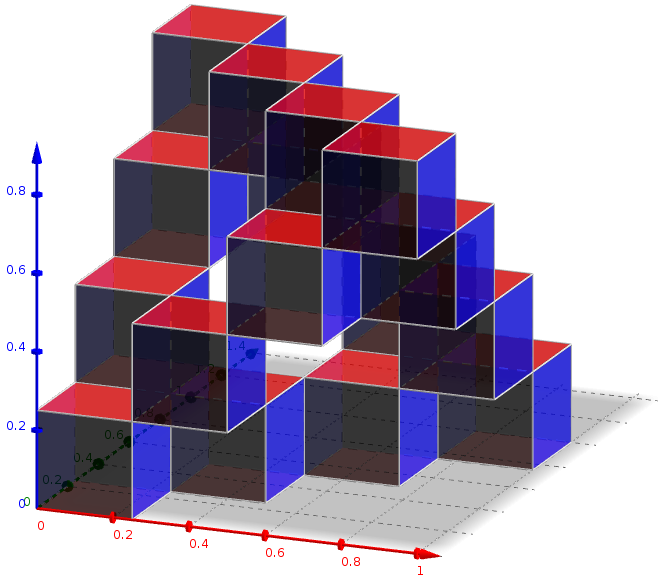}

\includegraphics[scale = 0.3]{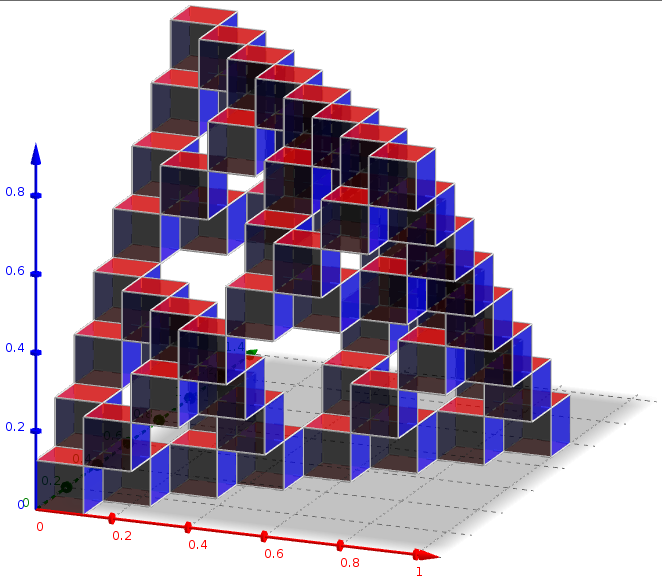}
\includegraphics[scale = 0.3]{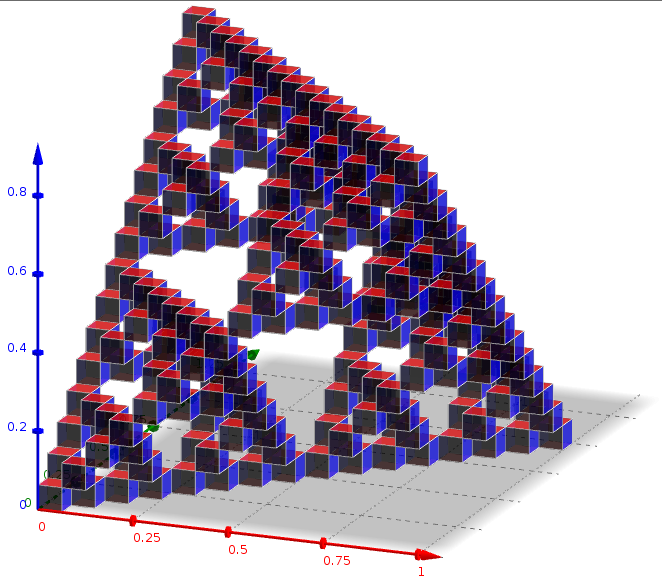}

As we already mentioned, $J$ is a  self-similar fractal of Hausdorff dimension two, called
``Sierpi\'nski tetrahedron''. This is a Kantor-type set which is a limit of iterations of unions of $4^n$ tetrahedrons.
Remarkably, in our proof we get an alternative construction and obtain $J$ as an intersection of collections of cubes.

\begin{remark}
The most trivial example of a fractal solution to the Monge-Kantorovich problem is apparently
 the $(3,2)$-Kantorovich problem with
Lebesgue measure  projections and $c = 1 - x \oplus y$. Then the solution is again the Sierpi\'nski tetrahedron.
But this due to a special choice of the cost function.
Unlike this, our main example deals with the smooth cost function $c=xyz$
and the extremality of the presented solution is highly non-obvious.
In addition, we will see in the subsequent sections that a solution to the corresponding dual problem
provides a non-trivial representation of the Sierpi\'nski tetrahedron as a set of zeroes of an a.e. differentiable function.

Less trivial example is given by measures supported on the set 
\begin{equation}
\label{ser-tr}
T = \{x + y  = x \xor y, \ x \in [0,1], \ y \in [0,1] \},
\end{equation}
which is a variant of the Sierpi\'nski triangle
(see \cite{FK}).

\centerline{ \includegraphics[scale = 0.2]{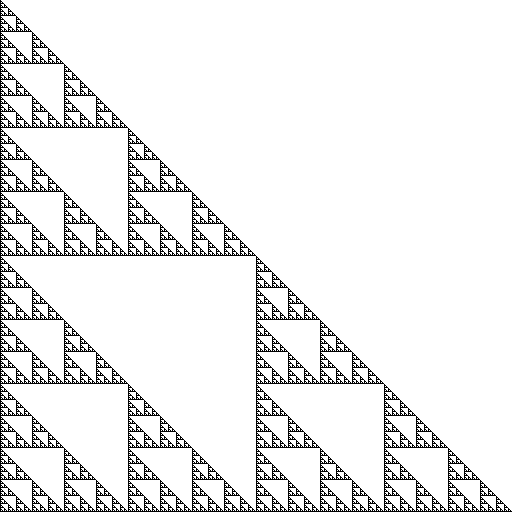}}

Note that all $x \in [0,1], \ y \in [0,1]$ satisfy
$$
x + y \ge x \xor y.
$$
Let $\pi$ be any probability measure on $T$ with projections $Pr_x \pi = \mu$, $Pr_y \pi = \nu$.
 Consider the Monge--Kantorovich problem
\begin{equation}
\label{x-y-xor}
\int_{[0,1]^2} x \oplus y \ d P \to \max,  \ Pr_x P = \mu, \ Pr_y \pi = P.
\end{equation}
By the Kantorovich duality principle the functions $x, y $ solve the corresponding dual problem.
Hence $\pi$ is a solution to (\ref{x-y-xor}).

In particular, the self-similar measure $\pi_0$ on $T$ solves problem (\ref{x-y-xor})
with marginals $\mu=\nu$, where
$\mu $ can be described as the distribution of the series
$
\sum_{i=1}^{\infty} \frac{\xi_i}{2^i},
$
where the sequence  of i.i.d. Bernoulli random variables $\{\xi_i\}$ satisfies
$\xi_i = 1$ with probability $1/3$ and $\xi_i=0$ with probability $2/3$. Another example is the (normalized) Lebesgue measure on the main diagonal.
\end{remark}

\section{Main example. Dual problem.}

For the problem 
$$
\int xyz d \pi \to \min,
$$
where $\pi$ has Lebesgue projections onto principal hyperplane, let us consider the corresponding  dual problem:
\begin{equation}
\label{main-dual}
\int F(x,y) dx dy + \int G(y,z) dx dy + \int H(z,x) dx dz \to \max,
\end{equation}
\begin{equation}
\label{main-dual0}
F(x,y) + G(y,z)  + H(z,x) \le xyz.
\end{equation}
It is clear that by symmetries of the problem one can reduce the general problem to the case
$$
F=G=H, \ F(x,y) = F(y,x).
$$
Let us remind to the reader that by the standard duality arguments any function $F$ satisfying (\ref{main-dual0})
and
$$
 F(x,y)  +  G(y,z)  +  H(z,x) = xyz, \ z = x \oplus y 
$$
$(x,y)$-almost everywere is a solution to (\ref{main-dual}).

Discretizing the problem and performing  finite-dimensional linear programming algorithms we were able to guess
 reccurent relations for the restriction of $F$ onto the set of dyadic rational numbers. Using these relations we prove 
the desired properties of our function. Finally, we will give an integral representation for the solution in the next section.

\subsection{Definition and easy properties.}

Let $\mathbb{N}_0$ be the set of all non-negative integers.

\begin{definition}

Let $f: \mathbb{N}_0 \times \mathbb{N}_0 \to \mathbb{Z}$ be a function defined as follows. Set:
$$f(0, 0) = 0, f(0, 1) = f(1, 0) = -1, f(1, 1) = 2.$$ In all other points $f$ is defined by the following  recurrent relations:
\begin{align} \label{definition}
	f(a, b) = 
	\begin{cases}
	8f({a \over 2}, {b \over 2}) \ \text{if } a \equiv 0 \Mod 2 \text{ and } b \equiv 0 \Mod 2, \\
	4\left(f({a - 1 \over 2}, {b \over 2}) + f({a + 1 \over 2}, {b \over 2})\right) + 3 \ \text{if} \ a \equiv 1 \Mod 2 \text{ and } b \equiv 0 \Mod 2,\\
	4\left(f({a \over 2}, {b - 1 \over 2}) + f({a \over 2}, {b + 1 \over 2})\right) + 3\  \text{if } a \equiv 0 \Mod 2 \ \text{and } b \equiv 1\Mod 2, \\
	2\left(f({a - 1 \over 2}, {b - 1 \over 2}) + f({a - 1 \over 2}, {b + 1 \over 2}) + f({a + 1 \over 2}, {b - 1 \over 2}) + f({a + 1 \over 2}, {b + 1 \over 2})\right) + 2,& \\
	\text{if} \ a \equiv 1 \Mod 2 \text{ and} \ b \equiv 1\Mod 2.
	\end{cases}
\end{align} 

\end{definition}

The following properties can be immediately derived from the definition.
\begin{equation}
	f(a, b) = f(b, a).
\end{equation}
	If $a$ is odd and $b$ is even, then
	\begin{equation} \label{even_odd}
	f(a, b) = {1 \over 2}(f(a + 1, b) + f(a - 1, b)) + 3.
\end{equation}

	If $a$ is odd and $b$ is odd, then 
\begin{equation} \label{odd_odd}
f(a, b) = {1 \over 2}(f(a + 1, b) + f(a - 1, b)) - 2.
\end{equation}

\begin{equation} \label{parity}
	f(a, b) \equiv a + b \Mod 2.
\end{equation}

	\subsection{Continuity.}
	
	Using the homogeneity relation 
	$$f(2a, 2b) = 8f(a, b)$$
	with factor two one can define $f_C(x, y)$ for any non-negative binary-rational $x$ and $y$. Namely, assume that  $(x, y) = ({a \over 2^n}, {b \over 2 ^ n})$, then one can set $f_C(x, y) = 8^{-n}f(a, b)$. It  is easy to check that $f_C$ is well-defined. In
	what follows we  extend $f_C$ to all pairs of non-negative real numbers by continuity. 
	To this end we need some estimates of the  increments of $f$.
	
	Consider a family of integer segments $I_n$: $I_n = [0, 2^{n + 1}]$,  $n \ge 0$. Note that for any $a \in I_n$ with $n \ge 1$ the numbers ${a \over 2}$ for  even $a$, and ${a + 1 \over 2}$ and ${a - 1 \over 2}$ for odd  $a$, belong to the segment $I_{n - 1}$.
	
	Set: $$N_{n, m} = \max(|f(a + 1, b) - f(a, b)| : a, (a + 1) \in I_n, b \in I_m).$$ 
	
	\begin{lemma}
		There exists universal constant $C$, such that $N_{n, m} \le C(4^n + 4^m)$.
	\end{lemma}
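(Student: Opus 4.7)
The plan is to establish the diagonal recurrence $N_{n,m} \le 4\, N_{n-1, m-1} + c_0$ for $n, m \ge 1$, together with the one-variable versions $N_{n, 0} \le 4\, N_{n-1, 0} + c_0$ and (by the symmetry $f(a, b) = f(b, a)$) $N_{0, m} \le 4\, N_{0, m-1} + c_0$, for some absolute constant $c_0$. Since $N_{0, 0}$ is a finite constant (it only involves finitely many values of $f$), iterating these recurrences will yield $N_{n, m} \le C \cdot 4^{\max(n,m)} \le C(4^n + 4^m)$, which is exactly the claim.

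To produce the recurrence I would fix $a, a+1 \in I_n$ and $b \in I_m$ and split into four cases according to the parities of $a$ and $b$ (noting that exactly one of $a, a+1$ is odd). In each case one applies (\ref{definition}) to both $f(a, b)$ and $f(a+1, b)$. The half-scale arguments $a/2$ or $(a \pm 1)/2$ lie in $[0, 2^n] = I_{n-1}$, and likewise $b/2$ or $(b \pm 1)/2$ lies in $I_{m-1}$. After cancelling the terms common to the two expressions, the difference $f(a+1, b) - f(a, b)$ simplifies to a combination of at most two first-coordinate increments of $f$ at half-scale arguments, with coefficient $4$ if there is a single surviving increment and coefficient $2$ each if there are two, plus a small constant in $\{\pm 1, \pm 3\}$. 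For instance, in the case $a$ even, $b$ odd, a direct calculation gives
$$
f(a{+}1, b) - f(a, b) = 2\bigl[f(\tfrac{a}{2}{+}1, \tfrac{b-1}{2}) - f(\tfrac{a}{2}, \tfrac{b-1}{2})\bigr] + 2\bigl[f(\tfrac{a}{2}{+}1, \tfrac{b+1}{2}) - f(\tfrac{a}{2}, \tfrac{b+1}{2})\bigr] - 1,
$$
which is bounded by $4 N_{n-1, m-1} + 1$. The other three parity patterns give analogous identities, all with total absolute coefficient $4$ and additive error at most $3$.

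For the boundary case $m = 0$ the same computation applies: $b \in \{0, 1, 2\}$ forces $b/2$ or $(b \pm 1)/2$ to lie in $\{0, 1\} \subset I_0$, yielding $N_{n, 0} \le 4 N_{n-1, 0} + c_0$ and hence $N_{n, 0} \le C \cdot 4^n$; the case $n = 0$ reduces to this by the symmetry in the arguments of $f$. The main obstacle is not conceptual but purely bookkeeping: the four parity cases must all be carried out, and one must verify in each that after cancellation the coefficient in front of the half-scale increments is exactly $4$ — any larger factor would destroy the exponential bound. Once the diagonal recurrence is in hand, a two-step iteration (diagonal down to the axis, then along the axis) gives $N_{n, m} \le 4^{\min(n,m)} N_{|n-m|, 0} + c_0 \sum_{j=0}^{\min(n,m)-1} 4^j \le C \cdot 4^{\max(n, m)}$, completing the proof.
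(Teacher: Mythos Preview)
Your proposal is correct and follows essentially the same reduction as the paper: in both cases the recursive definition of $f$ is used to bound the one-step increment at scale $(n,m)$ by $4$ times increments at scale $(n-1,m-1)$ plus an $O(1)$ error, and this factor $4$ is exactly what makes the $4^n+4^m$ bound work. The only organizational differences are that the paper first estimates the step-two increment $|f(a+2,b)-f(a,b)|$ (picking up a clean factor $8$ from $f(2\cdot,2\cdot)=8f(\cdot,\cdot)$) and then halves via the midpoint relations (6.3)--(6.4), whereas you compute the step-one increment directly in the four parity cases; and the paper carries the hypothesis $N_{n,m}\le C_1(4^n+4^m)+C_2$ through the induction, while you first isolate the recurrence $N_{n,m}\le 4N_{n-1,m-1}+c_0$ and then iterate. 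One small remark: your appeal to the symmetry $f(a,b)=f(b,a)$ for the boundary recurrence $N_{0,m}\le 4N_{0,m-1}+c_0$ is not quite the right justification (symmetry swaps which variable is incremented, not the indices of $N$), but the recurrence itself holds for the same reason as the $m=0$ case, since for $a\in\{0,1\}$ the half-scale first arguments again lie in $I_0$.
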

	
	\begin{proof}
	It will be convenient to prove more general  inequality $N_{n, m} \le C_1(4^n + 4^m) + C_2$ applying induction method. At the end we obtain 
	that $C_2$ can take negative values. 
		
		Base of induction for $n = m = 0$ can be checked directly: $N_{0, 0} = 15 \le 2C_1 + C_2$.
		
		To prove the step of induction  let us estimate $|f(a + 2, b) - f(a, b)|$, where $b \in I_m$, $a, (a + 2) \in I_n$ and $a$ is even.

		Let $b$ be even. Then $|f(a + 2, b) - f(a, b)| = 8|f({a \over 2} + 1, {b \over 2}) - f({a \over 2}, {b \over 2})|$. If $n$ and $m$ are both strictly positive, we obtain by induction hypothesis 
		$$8\left|f\left({a \over 2} + 1, {b \over 2}\right) - f\left({a \over 2}, {b \over 2}\right)\right| \le N_{m-1, n-1}.$$
		If only one number (say, $m$) is positive,  then 
		$$8\left|f\left({a \over 2} + 1, {b \over 2}\right) - f\left({a \over 2}, {b \over 2}\right)\right| \le N_{m-1, 0}.$$ 
		In any case one gets
		\begin{multline}
		8\left|f\left({a \over 2} + 1, {b \over 2}\right) - f\left({a \over 2}, {b \over 2}\right)\right| \le 8\left(C_1\left(4^{n - 1} + 4^{m - 1} + {3 \over 4}\right) +C_2\right) = \\ = 2C_1(4^n + 4^m) + (6C_1 + 8C_2).$$
		\end{multline}
		Here we used  inequality $4^{\max(n - 1, 0)} + 4^{\max(m - 1, 0)} \le 4^{n - 1} + 4^{m - 1} + {3 \over 4}$, which holds
		provided one of the numbers  $n, m$ is positive.
		
		Using that $a + 1$ is odd and applying the recurrent relations \eqref{even_odd}  one gets
		\begin{align*}
		&f(a + 1, b) = {1 \over 2}(f(a, b) + f(a + 2, b)) + 3, \\
		&f(a + 2, b) - f(a + 1, b) = {1 \over 2}(f(a + 2, b) - f(a, b)) - 3, \\
		&f(a + 1, b) - f(a, b) = {1 \over 2}(f(a + 2, b) - f(a, b)) + 3.
		\end{align*}
		
		These estimates imply that $|f(a + 1, b) - f(a, b)|$ and $|f(a + 2, b) - f(a + 1, b)|$ can be estimated from above by
		\begin{multline}
		{1 \over 2}|f(a + 2, b) - f(a, b)| + 3 \le C_1(4^n + 4^m) + (3C_1 + 4C_2 + 3) \le  \\ \le C_1(4^n + 4^m) + C_2,
		\end{multline}
		provided $3C_1 + 4C_2 + 3 \le C_2$.
		
		Hence we obtain that for any even $b \in I_m$ and for any even $a, (a + 1) \in I_n$ the following inequality holds: $|f(a + 1, b) - f(a, b)| \le C_1(4^n + 4^m) + C_2$.
		
		Let now $b$ be odd. We estimate $|f(a + 2, b) - f(a, b)|$ for any even $a$ satisfying $a, (a + 2) \in I_n$ in a similar manner. Using recurrent relations \eqref{definition} we obtain:
		
		\begin{multline*}
			f(a + 2, b) - f(a, b) = 4\left[f\left({a \over 2} + 1, {b + 1 \over 2}\right) - f\left({a \over 2}, {b + 1 \over 2}\right)\right] + \\ 
			+ 4\left[f\left({a \over 2} + 1, {b - 1 \over 2}\right) - f\left({a \over 2}, {b - 1 \over 2}\right)\right] + 6 \le \\ \le 8\left[C_1\left(4^{n - 1} + 4^{m - 1} + {3 \over 4}\right) + C_2\right] + 6 = 2C_1(4^n + 4^m) + (6C_1 + 8C_2 + 6).
		\end{multline*}
		
		Next we  estimate $|f(a + 1, b) - f(a, b)|$ and $|f(a + 2, b) - f(a + 1, b)|$. Since $a + 1$ and $b$ are odd, one gets applying \eqref{odd_odd}  
		\begin{align*}
		&f(a + 1, b) = {1 \over 2}(f(a, b) + f(a + 2, b)) - 2, \\
		&f(a + 2, b) - f(a + 1, b) = {1 \over 2}(f(a + 2, b) - f(a, b)) + 2, \\
		&f(a + 1, b) - f(a, b) = {1 \over 2}(f(a + 2, b) - f(a, b)) - 2.
		\end{align*}
		
		Finally,
		\begin{multline*}
			|f(a + 1, b) - f(a, b)|, |f(a + 2, b) - f(a + 1, b)| \le \\
			\le {1 \over 2}|f(a + 2, b) - f(a, b)| + 2 \le C_1(4^n + 4^m) + 3C_1 + 4C_2 + 5 \le \\
			\le C_1(4^n + 4^m) + C_2,
		\end{multline*}
		provided that $3C_1 + 4C_2 + 5 \le C_2$.
		
		Now we get that for all odd $b \in I_m$ and for all $a, (a + 1) \in I_n$ one has 
		$$|f(a + 1, b) - f(a, b)| \le C_1(4^n + 4^m) + C_2.$$
		 This implies $N_{n, m} \le C_1(4^n + 4^m) + C_2$, which completes the induction step.
		
		To conclude it is sufficient to find  solutions $C_1$ and $C_2$ to the  following system of inequalities
		\begin{align}
		\begin{cases*}
			2C_1 + C_2 \ge 15, \\
			3C_1 + 4C_2 + 3 \le C_2, \\
			3C_1 + 4C_2 + 5 \le C_2.
		\end{cases*}
		\end{align}
		Set: $C_1 = 17, C_2 = -19$. This completes the proof.
	\end{proof}

	In what folows we consider the square $$I = [0, 2^{N + 1}] \times [0, 2^{N + 1}].$$ 
	Assume that  dyadic rational numbers $x, \Delta x, y, \Delta y$ satisfy $(x, y), (x + \Delta x, x + \Delta y) \in I$. 
	
	\begin{lemma}
		$|f_C(x + \Delta x, x + \Delta y)  - f_C(x, y)| \le 2^{2N + 1}C(|\Delta x| + |\Delta y|)$.
	\end{lemma}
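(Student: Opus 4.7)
The plan is to reduce the inequality to a discrete telescoping estimate, then feed that into the bound $N_{n,m} \le C(4^n + 4^m)$ from the previous lemma, and finally rescale by $8^{-n}$.

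First I would choose a common dyadic denominator. Since $x,y,\Delta x,\Delta y$ are dyadic rationals, pick $n \ge 0$ large enough so that
$$
x = \tfrac{a}{2^n}, \ y = \tfrac{b}{2^n}, \ \Delta x = \tfrac{\alpha}{2^n}, \ \Delta y = \tfrac{\beta}{2^n}
$$
for some integers $a,b \ge 0$ and $\alpha,\beta \in \mathbb{Z}$. By hypothesis $(x,y)$ and $(x+\Delta x, y+\Delta y)$ lie in $I=[0,2^{N+1}]^2$, so after multiplying by $2^n$ the points $(a,b)$ and $(a+\alpha,b+\beta)$ lie in $[0,2^{N+n+1}]^2 = I_{N+n}\times I_{N+n}$. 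Because $I_{N+n}$ is a full integer interval, the L-shaped integer path $(a,b) \to (a+\alpha,b) \to (a+\alpha,b+\beta)$ stays inside $I_{N+n}\times I_{N+n}$.

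Next I would telescope along this path. Each unit horizontal step contributes at most $N_{N+n,N+n}$, and each unit vertical step contributes at most $N_{N+n,N+n}$ as well, using the symmetry $f(a,b)=f(b,a)$. By the previous lemma
$$
N_{N+n,N+n} \le C\bigl(4^{N+n} + 4^{N+n}\bigr) = 2C\cdot 4^{N+n}.
$$
Hence
$$
|f(a+\alpha,b+\beta) - f(a,b)| \le (|\alpha|+|\beta|)\cdot 2C\cdot 4^{N+n}.
$$
Dividing by $8^n$ and using the definition $f_C(a/2^n,b/2^n)=8^{-n}f(a,b)$, together with $4^{N+n}/8^n = 2^{2N}\cdot 2^{-n}$ and $|\alpha|/2^n=|\Delta x|$, $|\beta|/2^n=|\Delta y|$, gives
$$
|f_C(x+\Delta x,y+\Delta y) - f_C(x,y)| \le 2C\cdot 2^{2N}\cdot(|\Delta x|+|\Delta y|) = 2^{2N+1}C\bigl(|\Delta x|+|\Delta y|\bigr),
$$
which is the desired inequality.

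The argument is essentially bookkeeping, so there is no serious obstacle, but the one point that requires care is the choice of path: one must verify that the intermediate lattice points stay inside $I_{N+n}$ so that the previous lemma applies at every step. This is immediate here because both endpoints lie in a rectangular integer box and the path is coordinate-monotone. The constant $C$ appearing in the final bound is exactly the constant $C_1=17$ (with accompanying $C_2 = -19$) produced in the proof of the preceding lemma; one checks that the inequality $N_{n,m} \le C(4^n + 4^m)$ absorbs the negative constant $C_2$, so stating the bound with a single constant $C$ is legitimate.
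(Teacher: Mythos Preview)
Your proof is correct and follows essentially the same route as the paper: pick a common dyadic denominator, telescope the discrete increment into unit steps bounded by $N_{N+n,N+n}$, apply the previous lemma, and rescale by $8^{-n}$. You are simply more explicit than the paper about the L-shaped path and the verification that intermediate lattice points remain in $I_{N+n}\times I_{N+n}$.
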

	
	\begin{proof}
		There exist an integer number $M$, such that $2^Mx$, $2^My$, $2^M \Delta x$, $2^M \Delta y$ are non-negative integers. Then the desired result follows from the line of inequalities
		
		\begin{multline*}
		|f_C(x + \Delta x, x + \Delta y)  - f_C(x, y)| = \\ 
		= {1 \over 8^M}|f(2^M(x + \Delta x), 2^M(y + \Delta y)) - f(2^Mx, 2^My)| \le \\ \le {1 \over 8^M}2^M(|\Delta x| + |\Delta y|)N_{N + M, N + M} \le  \\ 
		\le{1 \over 4^M}C(4^{N + M} + 4^{N + M})(|\Delta x| + |\Delta y|) = 2^{2N + 1}C(|\Delta x| + |\Delta y|).
		\end{multline*}
	\end{proof}
	
This  statement immediately implies that for every  Cauchy sequence  $(x_i, y_i)$ the sequence $f_C(x_i, y_i)$ is a Cauchy sequence as well. Thus $f_C$ can be extended to a continuous function on the set of non-negative real numbers. In what follows $f_C$ denotes this extension.
	
	From the properties of $f$ and continuity of $f_C$ we infer the important  homogeneity property:
	
	\begin{proposition} \label{scalability}
		$$f_C(2x, 2y) = 8f_C(x, y).$$
	\end{proposition}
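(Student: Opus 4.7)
The plan is to verify the homogeneity first on non-negative dyadic rationals, where it follows essentially by inspection from the first clause of the recurrent definition \eqref{definition}, and then to pass to arbitrary non-negative reals by continuity.

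First, I would fix a pair $(x,y)$ of non-negative dyadic rationals and choose a common denominator $2^n$ (with $n \ge 0$) so that $x = a/2^n$ and $y = b/2^n$ for non-negative integers $a, b$. By the very definition of $f_C$ on dyadic rationals one has $f_C(x,y) = 8^{-n} f(a,b)$. For the point $(2x, 2y)$ the natural representation with the same denominator is $(2a/2^n, 2b/2^n)$, so $f_C(2x, 2y) = 8^{-n} f(2a, 2b)$. Applying the first case of the recurrent relation \eqref{definition}, which states $f(2a, 2b) = 8 f(a,b)$, gives immediately
\begin{equation*}
f_C(2x, 2y) \;=\; 8^{-n} f(2a, 2b) \;=\; 8 \cdot 8^{-n} f(a,b) \;=\; 8 f_C(x,y).
\end{equation*}
(It is worth pointing out here that $f_C$ is well-defined: if one uses a different common denominator $2^{n+k}$, the same computation, applied $k$ times, shows the value is unchanged; this has already been noted in the construction of $f_C$.)

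Second, I would extend the identity to all non-negative reals $(x,y)$ by continuity. Given such $(x,y)$, pick a sequence of non-negative dyadic rationals $(x_i, y_i)$ converging to $(x,y)$; then $(2x_i, 2y_i) \to (2x, 2y)$. By the continuous extension of $f_C$ established in the previous lemma, both $f_C(x_i, y_i) \to f_C(x,y)$ and $f_C(2x_i, 2y_i) \to f_C(2x, 2y)$. Passing to the limit in the equality $f_C(2x_i, 2y_i) = 8 f_C(x_i, y_i)$, which holds for every $i$ by the first step, yields $f_C(2x, 2y) = 8 f_C(x,y)$, as required.

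There is no real obstacle in this proof: the homogeneity is hard-wired into the recurrence and the only nontrivial ingredient, namely that $f_C$ actually extends continuously to the non-negative reals, has already been supplied by the preceding increment estimates. The entire argument is a one-line computation on dyadic rationals followed by a standard density argument.
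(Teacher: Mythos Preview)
Your proof is correct and is exactly the approach the paper has in mind: the paper does not spell out a proof but simply notes that the proposition follows ``from the properties of $f$ and continuity of $f_C$'', which is precisely your two-step argument (homogeneity on dyadic rationals from the first clause of \eqref{definition}, then passage to the limit).
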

	
\subsection{Solution to the dual problem.}

In this section we prove our main duality result.
Namely, let us set 
$$F(a, b, c) = f(a, b) + f(b, c) + f(c, a)$$ and 
$$F_C(x, y, z) = f_C(x, y) + f_C(y, z) + f_C(z, x).$$
We show that function $\frac{1}{8} F_C$ solves the dual problem. Note that
Theorem \ref{dualth} does not establish existence of a solution to the dual problem.
In this concrete example we construct it explicitly.

The following theorem is the main result of this section.

\begin{theorem}
Function $F_C$ satisfies
$$
F_C(x,y,z) \le 8 xyz.
$$
The case of equality $F_C(x,y,z) = 8 xyz$ holds if and only if $(x,y,z)$ belongs to the closure of the set
$$
x \oplus y \oplus z=0.
$$
In particular, the triple $\frac{1}{8} f_C(x,y), \frac{1}{8} f_C(x,z), \frac{1}{8} f_C(y,z)$ solves
problem (\ref{main-dual}).
\end{theorem}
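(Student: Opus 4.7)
The plan is to prove the pointwise inequality $F_C(x,y,z) \le 8xyz$ first on dyadic rationals via the integer-level recurrences for $f$, and then extend to all non-negative reals by the continuity of $f_C$ established in the preceding subsection. Using $f_C(a/2^N,b/2^N)=f(a,b)/8^N$ and the homogeneity of Proposition~\ref{scalability}, the pointwise statement on a dyadic triple $(a/2^N,b/2^N,c/2^N)$ is equivalent to the purely integer statement $F(a,b,c)\le 8abc$ for all $a,b,c\in\mathbb{N}_0$, with a matching equality characterization. The duality conclusion will then follow from the first two assertions by weak duality: the triple $\tfrac{1}{8}f_C$ is dual-feasible by the inequality, while the primal optimizer $\pi$ supported on the Sierpi\'nski tetrahedron (constructed in Section~5) satisfies $\tfrac{1}{8}\int F_C\,d\pi = \int xyz\,d\pi$ by the equality characterization, and the left side rearranges, using that every two-dimensional projection of $\pi$ is $\lambda^2$, into $\tfrac{3}{8}\int f_C\,d\lambda^2$, which is precisely the dual value of $\tfrac{1}{8}f_C$.

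For the integer inequality I would work with the gap $G(a,b,c):=8abc-F(a,b,c)$, which is an even integer by (\ref{parity}), and prove by strong induction on $\max(a,b,c)$ that $G\ge 0$, with $G(a,b,c)=0$ if and only if there exist $\delta_a,\delta_b,\delta_c\in\{0,1\}$ (respecting the allowed range of each coordinate) such that $(a-\delta_a)\oplus(b-\delta_b)\oplus(c-\delta_c)=0$ --- equivalently, such that $(a/2^N,b/2^N,c/2^N)$ lies in the closure of $\{x\oplus y\oplus z=0\}$ for any $N$ with $\max(a,b,c)\le 2^N$. The base case on $\{0,1\}^3$ is a direct check against the initial values of $f$. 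The inductive step splits by parities of $(a,b,c)$ and uses the four branches of Definition~\ref{definition}: the all-even case gives $G(a,b,c)=8\,G(a/2,b/2,c/2)$, so the claim transfers verbatim; the one-odd case (say $a$ odd, $b=2b'$, $c=2c'$) yields, after a short algebraic manipulation, $G(a,b,c)=4\bigl[G\bigl(\tfrac{a-1}{2},b',c'\bigr)+G\bigl(\tfrac{a+1}{2},b',c'\bigr)\bigr]-6$; and the two-odd case yields the analogous identity $G(a,b,c)=2\sum_{\epsilon_1,\epsilon_2\in\{0,1\}}G\bigl(\tfrac{a-1}{2}+\epsilon_1,\tfrac{b-1}{2}+\epsilon_2,\tfrac{c}{2}\bigr)-8$, with the three-odd case reduced to one of the preceding via (\ref{odd_odd}).

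The main obstacle is the constant deficit $-6$ (respectively $-8$) appearing in these recurrences: the naive inductive estimate $G\ge 0$ applied termwise yields only $G(a,b,c)\ge -6$ (resp.\ $\ge -8$), which is insufficient. The combinatorial observation that closes the induction is that among the two (respectively four) ``adjacent'' integer triples appearing on the right, at most one can have $G=0$: any two of them differ by incrementing one (or two) coordinate(s) by $1$, and the nim-zero conditions defining $\{G=0\}$ cannot all simultaneously hold. Combined with the evenness of $G$, this forces $G_-+G_+\ge 2$ in the one-odd case (whence $G(a,b,c)\ge 2$, consistent with the fact that $a\oplus b\oplus c$ is then odd so $(a,b,c)$ cannot lie in the $\delta$-closure) and $\sum_\epsilon G_\epsilon\ge 4$ in the two-odd case (whence $G(a,b,c)\ge 0$, with equality if and only if exactly two summands vanish --- a pattern matching the $\delta$-closure condition). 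Once the integer inequality and its equality case are in hand, continuity of $F_C$ promotes the inequality to all non-negative reals; the closed set $\{F_C=8xyz\}$ contains all nim-zero dyadic points, hence contains the closure $\overline{\{x\oplus y\oplus z=0\}}$; conversely, the quantitative gap $G\ge 2$ at integer dyadic points outside the $\delta$-closure rescales to $8xyz-F_C(x,y,z)\ge 2\cdot 8^{-N}$ at dyadic level $N$, and together with the self-similar tetrahedral structure from Section~5 this rules out equality on the complement.
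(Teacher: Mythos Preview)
Your overall plan (prove the integer inequality $F(a,b,c)\le 8abc$ by a parity-case induction, extend to $F_C$ by continuity, then read off duality from feasibility plus equality on the support of the Section~5 optimizer) matches the paper. Two of your key claims, however, are false.

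\textbf{The ``at most one'' claim fails.} In the two-odd case your assertion that among the four reduced triples at most one can have $G=0$ is wrong, and you already sense this since two sentences later you say ``equality if and only if exactly two summands vanish.'' Concretely, take $(a,b,c)=(3,3,0)$: the four reduced triples are $(1,1,0),(1,2,0),(2,1,0),(2,2,0)$ with $G$-values $0,2,2,0$ respectively, so two of them vanish. The paper closes the induction with a weaker (but sufficient) invariant: \emph{$G(a,b,c)=0$ only if $a+b+c$ is even}. Among the $2^m$ reduced triples in the $m$-odd case, exactly half have odd sum and hence $G\ge 2$ by induction; this gives $\sum G\ge 2^m$, which precisely cancels the additive defects $-6$, $-8$, $-6$ in the one-, two-, and three-odd recurrences (and yields the strict bound $G\ge 2$ when $m$ is odd). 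You should carry this parity invariant, not the stronger nim-characterisation.

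\textbf{The $\delta$-closure characterisation is false.} You claim $G(a,b,c)=0$ iff some $(a-\delta_a)\oplus(b-\delta_b)\oplus(c-\delta_c)=0$ with $\delta_i\in\{0,1\}$, and that this is equivalent to $(a/2^N,b/2^N,c/2^N)$ lying in the closure of $\{x\oplus y\oplus z=0\}$. The triple $(1,1,1)$ refutes both: $G(1,1,1)=8-3f(1,1)=2\ne 0$, yet the $\delta$-condition holds (take $\delta=(1,0,0)$, giving $0\oplus1\oplus1=0$); and geometrically $(1/2,1/2,1/2)$ is the centroid of the Sierpi\'nski tetrahedron, which lies in none of the four level-$1$ subcubes and hence is \emph{not} in the closure. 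So the $\delta$-condition is neither equivalent to $G=0$ nor to membership in the closure.

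The paper does not attempt an integer-level equality characterisation. For the ``if'' direction it uses the translation identities (Proposition~\ref{nice}) to show that $8xyz-F_C$ is unchanged when one passes from $(x_0,y_0,z_0)$ with $x_0,y_0\ge 1/2$ to $(x_0-\tfrac12,y_0-\tfrac12,z_0)$, and scales by $8$ under doubling; a supremum-on-a-compact-set argument then forces the gap to vanish on the nim-zero set. For the duality conclusion only feasibility and this ``if'' direction are needed, exactly as you note. Your argument for the ``only if'' direction via $8xyz-F_C\ge 2\cdot 8^{-N}$ does not survive $N\to\infty$; if you want to establish it, the same translation/scaling identities reduce the problem to a fixed level and give a uniform positive lower bound on each discarded level-$1$ cube.
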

\begin{proof}
See Corollary \ref{1803} and Proposition \ref{xyz-equality}.
\end{proof}

\begin{proposition}
Function $F(a,b,c)$ satisfies inequality $$F(a, b, c) \le 8abc.$$ 
The equality case 
\begin{equation}
\label{equality}
F(a, b, c) = 8abc
\end{equation}
can hold only if $a + b + c \equiv 0 \Mod{2}$. 
\end{proposition}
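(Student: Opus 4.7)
The plan is to prove the following strengthened statement by induction on $M := \max(a,b,c)$: the quantity $G(a,b,c) := 8abc - F(a,b,c)$ is non-negative for every triple, and moreover $G(a,b,c) \ge 2$ whenever $a+b+c$ is odd. Both conclusions of the proposition are then immediate — the inequality from the first bound, and the equality restriction because $G(a,b,c) \ge 2 > 0$ on every odd-sum triple. The base case $M \le 1$ is a direct check on $\{0,1\}^3$ using $f(0,0)=0$, $f(0,1)=-1$, $f(1,1)=2$: one finds $G = 0$ on the four even-sum triples and $G = 2$ on the four odd-sum ones.

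For the inductive step, fix $(a,b,c)$ with $M \ge 2$ and split by the parities of the three coordinates. Using the recurrences \eqref{definition} together with $f(2x,2y) = 8f(x,y)$, rewrite each of the three pairwise $f$-values in $F(a,b,c)$ in terms of $f$-values at the ``halved'' coordinates obtained by replacing every odd entry $2m+1$ by the pair $m$, $m+1$. Writing $p = (a-1)/2$ (and analogously $q,r$) in the odd cases and regrouping the pairwise expansions into triple-sums, one obtains
\[
G(2p,2q,2r) = 8\, G(p,q,r),
\]
\[
G(2p+1,2q,2r) = 4\bigl(G(p,q,r) + G(p+1,q,r)\bigr) - 6,
\]
\[
G(2p+1,2q+1,2r) = 2 \sum_{\epsilon \in \{0,1\}^2} G(p+\epsilon_1,\, q+\epsilon_2,\, r) - 8,
\]
\[
G(2p+1,2q+1,2r+1) = \sum_{\epsilon \in \{0,1\}^3} G(p+\epsilon_1,\, q+\epsilon_2,\, r+\epsilon_3) - 6.
\]
The matching identity $\sum w \cdot 8 \cdot (\text{reduced product}) = 8abc$ is verified by expanding $2m+1 = m + (m+1)$ in each odd coordinate.

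The crucial combinatorial observation is that in every mixed-parity case the reduced triples split exactly evenly according to the parity of their coordinate-sum — a $1+1$ split in the one-odd case, a $2+2$ split in the two-odd case, and a $4+4$ split in the all-odd case (because among the $2^k$ subsets of $\{1,\ldots,k\}$ exactly half have even size). Applying the induction hypothesis, the contribution from the odd-sum reduced triples is at least $\tfrac{1}{2} \cdot 8 \cdot 2 = 8$, which is precisely enough to absorb the constants $-6, -8, -6$ and to yield $G(a,b,c) \ge 2,\,0,\,2$ respectively; this matches the required lower bound for the parity of $a+b+c$ in each case. The all-even case is handled directly by $G(2p,2q,2r) = 8 G(p,q,r)$. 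Since the reduced coordinates lie in $\{0,\ldots,\lceil M/2\rceil\} \subseteq \{0,\ldots,M-1\}$, the induction is well-founded.

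The main obstacle is discovering the correct strengthening. The naive bound $G \ge 0$ alone leaves the constants $-6$ and $-8$ uncompensated in the mixed-parity recursions; the refinement $G \ge 2$ on odd-sum triples supplies exactly the surplus needed. This refinement is natural given the parity identity $F \equiv 2(a+b+c) \equiv 0 \pmod 2$ from \eqref{parity}, which forces $G$ to be an even integer, so the smallest positive value available for $G$ is precisely $2$.
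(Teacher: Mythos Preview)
Your proof is correct and follows essentially the same approach as the paper: the paper also inducts by parity cases, derives the same recurrences (phrased for $F$ rather than your $G=8abc-F$), and uses the same key observation that half of the reduced triples have odd coordinate-sum together with the evenness of $F$ to obtain the $2$-gap. Your presentation is slightly tidier in making the induction parameter $\max(a,b,c)$ and the strengthened hypothesis $G\ge 2$ on odd-sum triples explicit, but the argument is the same.
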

In particular, continuity of $f_C$ implies
\begin{corollary} \label{1803}
 $$F_C(x, y, z) \le 8xyz.$$
\end{corollary}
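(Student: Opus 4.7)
My plan is to prove a strengthened statement by induction on $a+b+c$: setting $G(a,b,c) := 8abc - F(a,b,c)$, one has $G(a,b,c) \ge 0$ always, and moreover $G(a,b,c) \ge 2$ whenever $a+b+c$ is odd. Both parts of the proposition follow immediately, since in the odd case $G\ge 2>0$ forces the strict inequality $F<8abc$.

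The base cases are the eight triples in $\{0,1\}^3$; by the full symmetry of $F$ inherited from $f(a,b)=f(b,a)$ it suffices to verify the four representatives $(0,0,0)$, $(0,0,1)$, $(0,1,1)$, $(1,1,1)$, whose $G$-values are $0,2,0,2$ respectively. These small triples must be handled directly because the recurrences \eqref{definition} become self-referential at them: for example, for $a=b=c=1$ the all-odd recurrence below expresses $F(1,1,1)$ as a sum which itself contains $F(1,1,1)$, and similar degeneracies occur at $(0,0,1)$ and $(1,1,0)$ (and their permutations).

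For the inductive step with $a+b+c\ge 4$ I split by the parities of $(a,b,c)$. Unfolding \eqref{definition} in each of the four parity classes gives, after routine rearrangement, the identities
\begin{align*}
F(a,b,c) &= 8\, F(a/2,b/2,c/2) \quad \text{(all even)},\\
F(a,b,c) &= 4\bigl[F(a/2,b/2,(c-1)/2)+F(a/2,b/2,(c+1)/2)\bigr]+6 \quad \text{($c$ odd)},\\
F(a,b,c) &= 2\sum_{i,j\in\{\pm 1\}} F\bigl((a+i)/2,(b+j)/2,c/2\bigr)+8 \quad \text{($a,b$ odd)},\\
F(a,b,c) &= \sum_{i,j,k\in\{\pm 1\}} F\bigl((a+i)/2,(b+j)/2,(c+k)/2\bigr)+6 \quad \text{(all odd)}.
\end{align*}
Using the elementary identity $\sum_{i\in\{\pm 1\}}(a+i)=2a$, one checks that in each case the corresponding sum of $8\cdot(\text{child argument product})$ equals exactly the coefficient of $abc$ needed so that subtracting the $F$-identity from the identity for $8abc$ converts them into recurrences for $G$: namely $G = 8 G_{\text{child}}$, $G = 4(G_1+G_2)-6$, $G = 2\sum G_{ij}-8$, and $G = \sum G_{ijk}-6$ respectively. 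For $a+b+c\ge 4$ all child triples have strictly smaller coordinate sum, so the inductive hypothesis applies.

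The final ingredient is a parity count of the child coordinate sums. In the one-odd case the two children have sums differing by $1$, so exactly one is odd and the induction hypothesis gives $G_1+G_2\ge 2$, hence $G\ge 4\cdot 2-6=2$. In the two-odd case the child sums equal $(a+b+c+i+j)/2$ and direct enumeration shows exactly $2$ of the $4$ are odd, giving $\sum G_{ij}\ge 4$ and $G\ge 0$. In the all-odd case the child sums equal $(a+b+c+i+j+k)/2$ with $i+j+k\in\{\pm 1,\pm 3\}$, and exactly $4$ of the $8$ triples are odd, so $\sum G_{ijk}\ge 8$ and $G\ge 2$. In each case the bound matches the required strengthened hypothesis and the induction closes. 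The main technical burden will be the accurate parity bookkeeping and checking that the self-referential degeneracies occur only at the enumerated base triples.
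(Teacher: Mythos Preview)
Your argument is essentially the paper's own proof, repackaged via $G=8abc-F$: the four parity-case recurrences for $F$, the observation that $F\equiv 0\pmod 2$ (equivalently, your strengthened hypothesis $G\ge 2$ on odd-sum triples), and the count of odd-sum children in each case are exactly what the paper does.

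Two small points to tidy. First, your phrase ``for $a+b+c\ge 4$ all child triples have strictly smaller coordinate sum'' leaves the non-base triples with $a+b+c\in\{2,3\}$ (e.g.\ $(0,0,2)$, $(0,1,2)$, $(0,0,3)$) unaccounted for; but the same child-sum check shows that outside $\{0,1\}^3$ \emph{every} child has strictly smaller sum, so the induction in fact covers them. Second, what you have written proves the integer inequality $F(a,b,c)\le 8abc$; to obtain the stated corollary $F_C(x,y,z)\le 8xyz$ you should add the one-line remark that both sides are continuous and dyadic rationals are dense, which is precisely how the paper passes from the proposition to the corollary.
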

\begin{proof}
Let us prove the claim by induction. Base of induction is easy to check.	
	Note that $F(a, b, c) = f(a, b) + f(b, c) + f(c, a) \equiv (a + b) + (b + c) + (c + a) \equiv 0 \Mod{2}$ because of \eqref{parity}. The latter implies  $F(a, b, c) \le 8abc - 2$ provided $F(a, b, c) < 8abc$. 
	
	To prove  the induction step we consider several cases.
	
	\begin{itemize}
	
	\item All of $a, b, c$ are even. From \eqref{definition} we infer $F(a, b, c) = 8F({a \over 2}, {b \over 2}, {c \over 2})$.  By induction hypothesis $F(a, b, c) = 8F({a \over 2}, {b \over 2}, {c \over 2}) \le 8\cdot8 \cdot {a \over 2}\cdot {b \over 2}\cdot {c \over 2} = 8abc$. 
	
	\item Assume that one of the numbers $a, b, c$ (say, $a$) is odd and the other are even.
	We need to check 
	\begin{equation}
	\label{8abc-2}
	F(a, b, c) \le 8abc - 2,
	\end{equation} because $a + b + c \equiv 1 \Mod{2}$. Applying \eqref{definition} one gets
	\begin{multline*}
	F(a, b, c) = f(a, b) + f(a, c) + f(b, c) = \\ 
	= \left[4\left(f\left({a - 1 \over 2}, {b \over 2}\right) + f\left({a + 1 \over 2}, {b \over 2}\right)\right) + 3\right] + \\ 	
	+\left[4\left(f\left({a - 1 \over 2}, {c \over 2}\right) + f\left({a + 1 \over 2}, {c \over 2}\right)\right) + 3\right] + 8f\left({b \over 2}, {c \over 2}\right) = \\ 
	= 4\left(F\left({a - 1 \over 2}, {b \over 2}, {c \over 2}\right) + F\left({a + 1 \over 2}, {b \over 2}, {c \over 2}\right)\right) + 6.
	\end{multline*}
	
	One of the triples  $\left({a - 1 \over 2}, {b \over 2}, {c \over 2}\right)$, $\left({a + 1 \over 2}, {b \over 2}, {c \over 2}\right)$ admits even sum of elements, hence satisfies (\ref{8abc-2}).
	
	Therefore we can write:
	\begin{multline}
	4\left(F\left({a - 1 \over 2}, {b \over 2}, {c \over 2}\right) + F\left({a + 1 \over 2}, {b \over 2}, {c \over 2}\right)\right) + 6 \le \\ 4((a - 1)bc + (a + 1)bc - 2) + 6 = 8abc - 2.
	\end{multline}
	
	\item Assume that there are exactly two odd  numbers among  $a, b, c$. Without loss of generality they are $a$ and $b$. Check that $F(a, b, c) \le 8abc$, because $a + b + c \equiv 0 \Mod{2}$. Applying \eqref{definition} one gets
	\begin{multline}
	F(a, b, c) = f(a, b) + f(b, c) + f(c, a) = \\ 
	= \left[2\sum_{\Delta a, \Delta b \in \{-1, 1\}}f\left({a + \Delta a \over 2}, {b + \Delta b \over 2}\right) + 2\right] + \\ 
	+ \left[4\sum_{\Delta b \in \{-1, 1\}}f\left({b + \Delta b \over 2}, {c \over 2}\right) + 3\right] + \\ + \left[4\sum_{\Delta a \in \{-1, 1\}}f\left({a + \Delta a \over 2}, {c \over 2}\right) + 3\right] = \\ 
	= 2\sum_{\Delta a, \Delta b \in \{-1, 1\}}F\left({a + \Delta a \over 2}, {b + \Delta b \over 2}, {c \over 2}\right) + 8.
	\end{multline}
	
	Note that triples of the type $({a + \Delta a \over 2}, {b + \Delta b \over 2}, {c \over 2})$ there are exactly two with even sum of elements, so by induction hypothesis for at most two triples  (\ref{equality}) holds.
	
	Hence
	\begin{align*}
	2 &\sum_{\Delta a, \Delta b \in \{-1, 1\}}F\left({a + \Delta a \over 2}, {b + \Delta b \over 2}, {c \over 2}\right) + 8 \le  
	 2((2a)(2b)c - 2 \cdot 2) + 8  = 8abc.
	\end{align*}
	
	\item Finally let us assume that all $a, b, c$ are odd. Thus $a + b + c \equiv 1 \Mod{2}$, so we need to check $F(a, b, c) \le 8abc - 2$. Again, \eqref{definition} implies
	\begin{multline*}
	F(a, b, c) = f(a, b) + f(b, c) + f(c, a) = \\ 
	= \left[2\sum_{\Delta a, \Delta b \in \{-1, 1\}}f\left({a + \Delta a \over 2}, {b + \Delta b \over 2}\right) + 2\right] + \\ 
	+ \left[2\sum_{\Delta b, \Delta c \in \{-1, 1\}}f\left({b + \Delta b \over 2}, {c + \Delta c \over 2}\right) + 2\right] + \\ 
	+ \left[2\sum_{\Delta c, \Delta a \in \{-1, 1\}}f\left({c + \Delta c \over 2}, {a + \Delta a \over 2}\right) + 2\right]  = \\ 
	=\sum_{\Delta a, \Delta b, \Delta c \in \{-1, 1\}}F\left({a + \Delta a \over 2}, {b + \Delta b \over 2}, {c + \Delta c \over 2}\right) + 6.
	\end{multline*}
	Counting the equality cases and repeating the arguments from above one gets
	\begin{multline*}
	\sum_{\Delta a, \Delta b, \Delta c \in \{-1, 1\}}F\left({a + \Delta a \over 2}, {b + \Delta b \over 2}, {c + \Delta c \over 2}\right) + 6 \le \\ 
	\le (2a)(2b)(2c) - 2 \cdot 4 + 6 = 8abc - 2.
	\end{multline*}
	\end{itemize}

	Step of induction is verified in all possible cases.
	
\end{proof}

\subsection{Some nice identities.}

Here we prove  some other useful identities for $f(a, b)$ and their continious analogues for $f_C(x, y)$.

\begin{proposition}
\label{1803pr}
	Let $0 \le a, b \le 2^n$. Then $$f(2^n + a, 2^n + b) = 2 \cdot 8^n + 6 \cdot 4^n(a + b) + f(a, b).$$
\end{proposition}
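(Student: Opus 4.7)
The natural approach is induction on $n$, exploiting the fact that for $n \ge 1$ the shift by $2^n$ preserves parity, so whichever branch of the recurrence \eqref{definition} is used to compute $f(a,b)$ is also the branch that applies to $f(2^n+a,2^n+b)$.

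The plan is as follows. First I would verify the base case $n=0$ directly: for $(a,b) \in \{0,1\}^2$ the four values $f(1,1), f(1,2), f(2,1), f(2,2)$ are obtained from \eqref{definition} in one step from the initial data, and one checks by hand that each equals $2 + 6(a+b) + f(a,b)$. For the inductive step, fix $n \ge 1$, assume the identity for $n-1$, and split according to the parity of $(a,b)$.

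In the even-even case the recurrence gives
$$f(2^n+a,2^n+b) = 8\,f\!\left(2^{n-1}+\tfrac{a}{2},\,2^{n-1}+\tfrac{b}{2}\right),$$
and applying the induction hypothesis (together with $f(a,b)=8f(a/2,b/2)$) immediately yields the claim. In the odd-even case I would use the middle branches of \eqref{definition} to expand $f(2^n+a,2^n+b)$ as a sum of two terms of the form $f(2^{n-1}+\tfrac{a\pm 1}{2}, 2^{n-1}+\tfrac{b}{2})$, apply the induction hypothesis to each, and match the result to the corresponding expansion of $f(a,b)$ via the same branch of \eqref{definition}; the $\pm 1$ shifts in the first argument cancel in the sum, leaving exactly $6\cdot 4^n(a+b)$ for the linear term. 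The odd-odd case is analogous but with four cross-terms $f(2^{n-1}+\tfrac{a+\Delta a}{2}, 2^{n-1}+\tfrac{b+\Delta b}{2})$ for $\Delta a,\Delta b \in \{-1,1\}$, where again $\sum(\Delta a+\Delta b)=0$ so the induction hypothesis gives a clean sum.

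The only technical point to check is that the boundary conditions $0 \le a,b \le 2^n$ pass to the valid range $0 \le a',b' \le 2^{n-1}$ for the values used by the induction hypothesis: if $a$ is odd then $1 \le a \le 2^n-1$, so $(a\pm 1)/2 \in [0,2^{n-1}]$, and similarly for $b$. The main obstacle — if it can even be called that — is purely arithmetic bookkeeping in the odd-odd case, making sure that the constant contribution $2$ from the recurrence combines correctly with the $2\cdot 4 \cdot 2\cdot 8^{n-1} = 2\cdot 8^n$ coming from the four inductive constants, and that $2\sum_{\Delta a,\Delta b}f\bigl(\tfrac{a+\Delta a}{2},\tfrac{b+\Delta b}{2}\bigr)+2$ collapses back to $f(a,b)$ via \eqref{definition}. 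No new ideas beyond this case analysis are required.
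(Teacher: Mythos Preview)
Your proposal is correct and follows essentially the same argument as the paper: induction on $n$ with the base $n=0$ checked by hand, then a parity case split (even--even, odd--even, odd--odd) in which the recurrence \eqref{definition} reduces $f(2^n+a,2^n+b)$ to values at level $n-1$ where the induction hypothesis applies. The paper's proof is exactly this case analysis, with the same arithmetic bookkeeping you describe.
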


\begin{proof}
	Apply induction by $n$. The case  $n = 0$ is easy to check. 
	
	Let $a$ and $b$ be even. Then
	\begin{multline*}
	f(2^n + a, 2^n + b) = 8f\left(2^{n - 1} + {a \over 2}, 2^{n - 1} + {b \over 2}\right) = \\ 
	= 8\left(2 \cdot 8^{n - 1} + 6 \cdot 4^{n - 1}{a + b \over 2} + f\left({a \over 2}, {b \over 2}\right)\right) = \\ 
	= 2 \cdot 8^n + 6 \cdot 4^n(a + b) + f(a, b).
	\end{multline*}
	
	Let exactly one of the numbers  $a$ or $b$ be odd. Without loss of generality assume that $a$ is odd. 
	Then
	
	\begin{multline*}
	f(2^n + a, 2^n + b) = \\
	= 4\left(f\left(2^{n - 1} + {a + 1 \over 2}, 2^{n - 1} + {b \over 2}\right) + f\left(2^{n - 1} + 2^{n - 1} + {a - 1 \over 2}, {b \over 2}\right)\right) + 3 = \\ 
	= 4\left[2\cdot 8^{n - 1} + 6 \cdot 4^{n - 1}{a + b + 1 \over 2} + f\left({a + 1 \over 2}, {b \over 2}\right)\right] + \\
	+ 4\left[2\cdot 8^{n - 1} + 6 \cdot 4^{n - 1}{a + b - 1 \over 2} + f\left({a - 1 \over 2}, {b \over 2}\right)\right] + \\ 
	+ 4\left[f\left({a + 1 \over 2}, {b \over 2}\right) + f\left({a - 1 \over 2}, {b \over 2}\right) \right] + 3 =  \\
	= 2 \cdot 8^n + 6 \cdot 4^n(a + b) + f(a, b).
	\end{multline*}
	
	Let both of $a$ and $b$ be odd. Similarly by the induction hypothesis:
	
	\begin{multline*}
	f(2^n + a, 2^n + b) = 2\sum_{\Delta a, \Delta b \in \{-1, 1\}}f\left(2^{n - 1} + {a + \Delta a \over 2}, 2^{n - 1} + {b + \Delta b \over 2}\right) + 2 = \\
	= 2\sum_{\Delta a, \Delta b \in \{-1, 1\}}\left[2 \cdot 8^{n - 1} + 6 \cdot 4^{n - 1}{a + b + \Delta a + \Delta b \over 2} + f\left({a + \Delta a \over 2 }, {b + \Delta b \over 2}\right)\right] + 2 = \\
	= 2 \cdot 8^n + 6 \cdot 4^n(a + b) + 2\sum_{\Delta a, \Delta b \in \{-1, 1\}}f\left({a + \Delta a \over 2 }, {b + \Delta b \over 2}\right) + 2 = \\
	= 2 \cdot 8^n + 6 \cdot 4^n(a + b) + f(a, b).
	\end{multline*}
\end{proof}

\begin{proposition}
	For $0 \le a, b \le 2^n$ one has $$f(2^n + a, b) = -8^n - 6~\cdot~4^na + 4^{n + 1}b + 8\cdot2^nab + f(a, b).$$
\end{proposition}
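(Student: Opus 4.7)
The plan is to prove the identity by induction on $n$, exactly in the spirit of Proposition \ref{1803pr}, using case analysis on the parities of $a$ and $b$ together with the recurrent relations \eqref{definition}.

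For the base case $n=0$, the claimed formula reads $f(1+a,b) = -1 - 6a + 4b + 8ab + f(a,b)$ for $a,b\in\{0,1\}$. I would verify this directly by using the initial values $f(0,0)=0$, $f(0,1)=f(1,0)=-1$, $f(1,1)=2$, together with the computed values $f(2,0)=8f(1,0)=-8$ and $f(2,1)=4(f(1,0)+f(1,1))+3=7$ obtained from \eqref{definition}. This gives four routine checks.

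For the induction step with $n\ge 1$, the key observation is that $2^n+a$ has the same parity as $a$, so the branch of \eqref{definition} applied to $f(2^n+a,b)$ is determined by the parities of $a$ and $b$. I would split into the four cases and, in each case, apply \eqref{definition} to express $f(2^n+a,b)$ as a combination of values $f(2^{n-1}+a',b')$ with $a'\in\{a/2,(a\pm 1)/2\}$ and $b'\in\{b/2,(b\pm 1)/2\}$, all lying in $[0,2^{n-1}]$ so that the induction hypothesis applies. Substituting the inductive formula and collecting the constant, linear, and bilinear contributions, the only nontrivial algebraic identity needed is the telescoping of the bilinear terms: for instance in the all-odd case, summing $8\cdot 2^{n-1}\cdot\tfrac{a+\Delta a}{2}\cdot\tfrac{b+\Delta b}{2}$ over $\Delta a,\Delta b\in\{-1,1\}$ gives $2^n\cdot 4ab$, which after the outer factor of $2$ yields $8\cdot 2^n ab$ as required. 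The residual $f$-terms plus additive constants $3$ or $2$ then reassemble, via the same branch of \eqref{definition} applied to $f(a,b)$, into $f(a,b)$ itself.

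The main obstacle is purely bookkeeping: checking in each of the four parity cases that the coefficients of $1$, $a$, $b$, and $ab$ produced by multiplying the inductive hypothesis by the appropriate factors (8, 4, 4, or 2) and summing over the one, two, or four inductive terms add up exactly to $-8^n$, $-6\cdot 4^n$, $4^{n+1}$, and $8\cdot 2^n$ respectively. Structurally this is the same kind of calculation as in Proposition \ref{1803pr}, with the only new feature being the appearance of the term $4^{n+1}b$ (arising because $b$ is not shifted, unlike in Proposition \ref{1803pr}) and the corresponding simpler bilinear term $8\cdot 2^n ab$.
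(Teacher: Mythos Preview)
Your proposal is correct and follows essentially the same route as the paper: induction on $n$, base case by direct computation, and the four parity cases handled via \eqref{definition} together with the observation that $2^n+a$ has the same parity as $a$. The only cosmetic difference is organizational: the paper packages the bookkeeping by noting that each of the four monomials $8^n$, $4^n a$, $4^n b$, $2^n ab$ separately satisfies the homogeneous recurrences $g(n,a,b)=8g(n-1,a/2,b/2)$, $g(n,a,b)=4g(n-1,(a-1)/2,b/2)+4g(n-1,(a+1)/2,b/2)$, etc.\ (the additive constants $+3$, $+2$ in \eqref{definition} cancel when one subtracts $f(a,b)$ from $f(2^n+a,b)$), so that the full right-hand side automatically does as well. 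This spares one from tracking all four coefficients simultaneously in each case, but the underlying computation is identical to what you outline.
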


\begin{proof}
	The proof is similar to the proof of Proposition \ref{1803pr}. Apply the induction by $n$. The base for $n = 0$ can be checked by an easy computation. 
	
	Consider $n \ge 1$.
	Set 
	$g_1(n, a, b) = -8^n$, $ g_2(n,a,b) =  6~\cdot~4^na$, $g_3(n,a,b) = 4^{n + 1}b$, $g_4(n,a,b)= 8\cdot2^nab$.
	 For induction step it is sufficient to check that the following identities hold:
	\begin{align*}
	&g_i(n, a, b) = 8g_i\left(n - 1, {a \over 2}, {b \over 2}\right), \\
	&g_i(n, a, b) = 4g_i\left(n - 1, {a - 1 \over 2}, {b \over 2}\right) + 4g_i\left(n - 1, {a + 1 \over 2}, {b \over 2}\right), \\
	&g_i(n, a, b) = 4g_i\left(n - 1, {a \over 2}, {b - 1 \over 2}\right) + 4g_i\left(n - 1, {a \over 2}, {b + 1 \over 2}\right),\\
	&g_i(n, a, b) = 2\sum_{\Delta a, \Delta b \in \{-1, 1\}}g_i\left(n - 1, {a + \Delta a \over 2}, {b + \Delta b \over 2}\right).
	\end{align*}
	
	Next we prove the desired identity by considering four different cases: $a$ is odd(even), $b$ is odd(even)
	and applying an appropriate  identity for all summands in the right hand side. For any of $8^n$, $4^na$, $4^nb$ and $2^nab$ these properties are obviously true. 
\end{proof}

Clearly, the continious analogues of these identities look as follows.
\begin{proposition} \label{nice}
	Let $0 \le x, y \le {1 \over 2}$. Then:
	\begin{align*}
	&f_C\left({1 \over 2} + x, {1 \over 2} + y\right) = {1 \over 4} + {3 \over 2}(x + y) + f_C(x, y), \\
	&f_C\left({1 \over 2} + x, y\right) = -{1 \over 8} - {3 \over 2}x + y + 4xy + f_C(x, y).
	\end{align*}
\end{proposition}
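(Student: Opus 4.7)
The plan is to derive these two continuous identities directly from the two discrete analogues proved in the immediately preceding propositions, and then to extend from dyadic rationals to all of $[0,1/2]$ by the continuity of $f_C$ that was established earlier in this section. The underlying idea is that $f_C$ is defined on dyadic rationals by the homogeneity relation $f_C(p/2^m, q/2^m) = f(p, q)/8^m$, so any polynomial identity for $f$ rescales into a polynomial identity for $f_C$ after dividing by an appropriate power of $8$.

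For dyadic rationals $x, y \in [0, 1/2]$, I would pick $n$ large enough that $a := 2^{n+1} x$ and $b := 2^{n+1} y$ are non-negative integers; the range assumption on $x, y$ gives $0 \le a, b \le 2^n$, so the hypotheses of the discrete propositions are satisfied. Substituting into the first discrete identity
\[
f(2^n + a, 2^n + b) = 2 \cdot 8^n + 6 \cdot 4^n(a + b) + f(a, b),
\]
dividing both sides by $8^{n+1}$, and using $(2^n + a)/2^{n+1} = 1/2 + x$ together with the elementary arithmetic identities $4^n \cdot 2^{n+1} = 2 \cdot 8^n$ and $f(a,b)/8^{n+1} = f_C(x,y)$, the right-hand side collapses to $\tfrac{1}{4} + \tfrac{3}{2}(x + y) + f_C(x, y)$ and the left-hand side becomes $f_C(1/2 + x,\, 1/2 + y)$. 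This yields the first formula on dyadic points.

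The second formula is obtained by the identical rescaling applied to the second discrete identity $f(2^n + a, b) = -8^n - 6 \cdot 4^n a + 4^{n+1} b + 8 \cdot 2^n ab + f(a,b)$: after dividing by $8^{n+1}$ the coefficient of $x$ becomes $-6 \cdot 4^n \cdot 2^{n+1}/8^{n+1} = -3/2$, the coefficient of $y$ becomes $4^{n+1} \cdot 2^{n+1}/8^{n+1} = 1$, and the coefficient of $xy$ becomes $8 \cdot 2^n \cdot 4^{n+1}/8^{n+1} = 4$, giving exactly $-\tfrac{1}{8} - \tfrac{3}{2}x + y + 4xy + f_C(x,y)$.

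Finally, since both sides of each of the two identities are continuous functions of $(x,y)$ on $[0,1/2]^2$ (by the continuity of $f_C$ proved in the preceding subsection), and they agree on the dense subset of dyadic rationals, they agree everywhere on $[0,1/2]^2$. There is no real obstacle here: the genuine content is encoded in the two arithmetical propositions that have already been verified by induction, and the remaining work is the bookkeeping of the powers of $2$, $4$ and $8$ together with an appeal to continuity.
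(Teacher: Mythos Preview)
Your proof is correct and is exactly the argument the paper has in mind: the paper states this proposition immediately after the two discrete identities with the words ``Clearly, the continuous analogues of these identities look as follows,'' and your rescaling by $8^{-(n+1)}$ together with the density/continuity argument is precisely the intended justification. The arithmetic you give for each coefficient checks out.
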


\subsection{Case of equality}

	\begin{proposition}
	\label{xyz-equality}
	The relation $x \oplus y \oplus z = 0$  implies $F_C(x, y, z) = 8xyz$. 
\end{proposition}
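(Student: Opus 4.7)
The plan is to reduce the continuous claim to its discrete dyadic analogue and then verify it by induction using the structural identities of Proposition \ref{nice}. Since $f_C$, and hence $F_C$, is continuous and triples of dyadic rationals are dense in the closure of $\{x \oplus y \oplus z = 0\}$, it suffices to prove $F_C(x,y,z) = 8xyz$ for all dyadic rationals $x, y, z \in [0,1]$ with $x \oplus y \oplus z = 0$. I would argue by strong induction on the smallest $N \ge 0$ such that $2^N x, 2^N y, 2^N z \in \mathbb{Z}$. The base case $N=0$ reduces to the four triples $(0,0,0), (0,1,1), (1,0,1), (1,1,0)$ in $\{0,1\}^3$, each of which is checked directly with $f(0,1) = -1$ and $f(1,1) = 2$ to yield $F_C = 0 = 8xyz$.

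For the inductive step I would split on the first binary digit of each coordinate. The constraint $x \oplus y \oplus z = 0$ forces the sum of first digits to be even, so either all first digits are $0$ (Case A: $x, y, z \in [0, \tfrac12]$) or exactly two of them equal $1$ (Case B). In Case A, the triple $(2x, 2y, 2z)$ lies in $[0,1]^3$, is dyadic at level $N-1$, and still satisfies $(2x)\oplus(2y)\oplus(2z) = 0$. The homogeneity $f_C(2s,2t) = 8 f_C(s,t)$ of Proposition \ref{scalability} together with the induction hypothesis gives
\begin{equation*}
F_C(x,y,z) = \tfrac{1}{8} F_C(2x, 2y, 2z) = \tfrac{1}{8}\cdot 8\cdot (2x)(2y)(2z) = 8xyz.
\end{equation*}

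In Case B, WLOG $x, y \ge \tfrac12$ and $z \le \tfrac12$; write $x = \tfrac12+x'$, $y = \tfrac12+y'$, $z = z'$ with $x', y', z' \in [0, \tfrac12]$. The xor condition descends to $x' \oplus y' \oplus z' = 0$, and Case A applied to this sub-triple yields $F_C(x', y', z') = 8 x' y' z'$. I would then expand $F_C(x,y,z)$ by Proposition \ref{nice}: its first identity for $f_C(\tfrac12+x', \tfrac12+y')$ and two applications of its second identity (via $f_C(s,t) = f_C(t,s)$) for $f_C(\tfrac12+y', z')$ and $f_C(\tfrac12+x', z')$. Upon summation the constants $\tfrac14 - \tfrac18 - \tfrac18$ cancel, the linear contributions $\tfrac32(x'+y') - \tfrac32 x' - \tfrac32 y'$ also cancel, and what remains is
\begin{equation*}
F_C(x,y,z) = 2z' + 4z'(x'+y') + F_C(x',y',z') = 2z'(1+2x')(1+2y') = 8zxy,
\end{equation*}
using $2x = 1+2x'$, $2y = 1+2y'$, and $z = z'$.

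The main point of care is the well-foundedness: Case B at level $N$ invokes Case A at the same level $N$ for the sub-triple $(x',y',z')$, but Case A itself then drops to level $N-1$, so the recursion terminates. A minor nuisance is the handling of coordinates equal to $\tfrac12$ exactly, where the leading bit is ambiguous under the paper's dyadic convention; the final passage from dyadic rationals to the closure absorbs this via continuity. I expect no serious algebraic obstacle once Proposition \ref{nice} is in hand: the entire computation hinges on those two clean addition formulas together with the factorization $1+2(x'+y')+4x'y' = (1+2x')(1+2y')$.
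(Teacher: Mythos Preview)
Your proof is correct and uses the same core ingredients as the paper: homogeneity under scaling by $2$ and the two addition formulas of Proposition~\ref{nice}, with the Case~B algebra matching the paper's computation verbatim. The only difference is packaging --- the paper argues by contradiction (assuming the maximum of $8xyz - F_C$ on the closure of $\{x\oplus y\oplus z=0\}$ is positive and using the same scaling/reduction to contradict maximality), whereas you proceed by direct induction on the dyadic level and then pass to the closure by continuity.
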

	\begin{proof}
	Assume the opposite and consider the maximum of $8xyz - F_C(x, y, z)$  on the closure $S$ of the  set of points  $(x, y, z)$ satisfying $x \oplus y \oplus z = 0$. This maximum $C$ exists since the set is compact and  $8xyz - F_C(x, y, z)$ is continuous. 
	It is sufficient to show that $C$ is not strictly positive.
	Find a point $(x_0, y_0, z_0)$ with $z_0 = x_0 \oplus y_0$ such that $8x_0 y_0 z_0 - F_C(x_0, y_0, z_0) > C/2$.
	
	The first numbers in the binary representations of $x_0,y_0,z_0$ contains either all zeroes of exactly two units, because  $x_0 \oplus y_0 \oplus z_0 = 0$. If they all are zeroes,
 then $2x_0 \oplus 2y_0 \oplus 2z_0 = 0$. Thus $8(2x_0)(2y_0)(2z_0) - F_C(2x_0, 2y_0, 2z_0) > 4C > C$, this contradicts to the 
 choice of $C$. If the numbers contain two units, without loss of generality assume $x_0=y_0=1$.
 Set $x_0 = {1 \over 2} + x_1$, $y_0 = {1 \over 2} + y_1$. The identities  \eqref{nice} imply
	
	\begin{multline*}
	8x_0y_0z_0 - F_C(x_0, y_0, z_0) = 8\bigl(x_1 + {1 \over 2}\bigl)\bigl(y_1 + {1 \over 2}\bigr)z_0 - F_C\bigl(x_1 + {1 \over 2}, y_1 + {1 \over 2}, z_0\bigr) = \\ 
	= 8x_1y_1z_0 + 4x_1z_0 + 4y_1z_0 + 2z_0-\\
	- f_C\left(x_1 + {1 \over 2}, y_1 + {1 \over 2}\right) - f_C\left(x_1 + {1 \over 2}, z_0\right) 
	- f_C\left(y_1 + {1 \over 2}, z_0\right) = \\
	= 8x_1y_1z_0 + 4x_1z_0 + 4y_1z_0 + 2z_0
	- \left[{1 \over 4} + {3 \over 2}(x_1 + y_1) + f_C(x_1, y_1)\right] - \\ - \left[-{1 \over 8} - {3 \over 2}x_1 + z_0 + 4x_1z_0 + f_C(x_1, z_0)\right]
	- \left[-{1 \over 8} - {3 \over 2}y_1 + z_0 + 4y_1z_0 + f_C(y_1, z_0)\right] = \\ 
	= 8x_1y_1z_0 - F_C(x_1, y_1, z_0).
	\end{multline*}
	Note that $x_1 \oplus y_1 \oplus z_0 = 0$, moreover, the function  $8xyz - F_C(x, y, z)$  takes at the point $(x_1, y_1, z_0)$ the same value $C/2$. Note that $x_1, y_1, z_0 \le {1 \over 2}$, but we have already shown that this is impossible. We got a contradiction.
	\end{proof}

\section{Integral representation of $f_C(a, b)$.}
	
	The solution to the dual problem in our main example has a simple relation to the
	(cumulative) distribution function
	$$
	I(a, b) = \int_{0}^{a} \int_{0}^{b}x \oplus y~dy dx, \ a, b \in \mathbb{R}_{+}.
	$$
	of the measure  $x \oplus y \ dx dy$.
	This function admits the following properties:
	
	\begin{property}
		Symmetry:  $I(a, b) = I(b, a)$.
	\end{property}
	
	\begin{property}
		Homogeneity with respect to factor $2$: $I(2a, 2b) = 8I(a, b)$.
	\end{property}
	
	\begin{proof}
		Note that for almost all $x$, $y$ and integer number $n$ one has $2^nx \oplus 2^ny = 2^n(x \oplus y)$. This yields
		\begin{multline*}
		I(2a, 2b) = \int_{0}^{2a}\int_{0}^{2b}x \oplus y~dy dx = {x = 2u \brack y = 2v} =\\
		= 4\int_{0}^{a}\int_{0}^{b} 2u \oplus 2v~dvdu = 8I(a, b).
		\end{multline*}
	\end{proof}
	
	\begin{property}
		For all $0 \le a \le 1$
		$$I(a, 1)~=~{a \over 2}.$$
	\end{property}
	
	\begin{proof}
		To this end we need the following lemma:
		
		\begin{lemma}
			For every couple $0 \le x, y \le 1$, where neither $x$ nor $y$ is binary rational, the following relation holds:
			 $x \oplus y + x \oplus (1 - y) = 1$.
		\end{lemma}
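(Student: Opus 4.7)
The plan is to prove the identity by a direct bitwise calculation in the binary representations, exploiting the fact that neither $x$ nor $y$ is dyadic rational so that their expansions are canonically defined (no ambiguity between terminating and non-terminating expansions).

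First I would write
$$
x = \overline{0.x_1 x_2 x_3 \ldots}, \qquad y = \overline{0.y_1 y_2 y_3 \ldots}, \qquad x_i, y_i \in \{0,1\},
$$
with infinitely many zeros and infinitely many ones in each expansion (this is where non-dyadicity is used). The key preliminary observation is that for a non-dyadic $y \in (0,1)$ one has the bitwise complement formula
$$
1 - y = \overline{0.(1-y_1)(1-y_2)(1-y_3)\ldots},
$$
which follows from $1 = \overline{0.111\ldots}$ and the fact that the subtraction $1-y$ involves no borrows because $y$ has infinitely many $0$'s in its expansion.

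Next I would apply the definition of $\oplus$ bit by bit. Since for each index $i$,
$$
x_i \oplus (1 - y_i) = 1 - (x_i \oplus y_i),
$$
I obtain
$$
x \oplus (1-y) = \overline{0.\bigl(1-(x_1\oplus y_1)\bigr)\bigl(1-(x_2\oplus y_2)\bigr)\ldots}.
$$
Setting $z_i = x_i \oplus y_i$, the sum $(x \oplus y) + (x \oplus (1-y))$ becomes
$$
\sum_{i=1}^{\infty} 2^{-i} z_i + \sum_{i=1}^{\infty} 2^{-i} (1 - z_i) = \sum_{i=1}^{\infty} 2^{-i} = 1,
$$
which is the desired identity.

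The only delicate point is the bookkeeping around dyadic rationals, and that is handled entirely by the hypothesis that $x$ and $y$ are not dyadic rationals (so that their binary expansions, as well as the expansion of $1-y$, are unambiguously the non-terminating ones used in the bitwise definition of $\oplus$). No further case analysis is needed.
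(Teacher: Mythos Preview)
Your proof is correct and follows essentially the same bitwise argument as the paper: one observes that the $i$-th digit of $x\oplus(1-y)$ is the complement of the $i$-th digit of $x\oplus y$, so the two numbers sum to $\sum_{i\ge 1}2^{-i}=1$. Your version is in fact more carefully written than the paper's, which is quite terse and contains a small typo (it writes $a_i\oplus b_i=0$ where $a_i+b_i=1$ is meant).
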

		
		\begin{proof}			
		Note that for  $a=x \oplus y$ and $b=x \oplus (1 - y)$ 
		the $i$-th digits satisfy $a_i = x_i \oplus y_i$, $b_i = x_i \oplus \overline{y_i}$. Clearly, $a_i \oplus b_i =0$.
		\end{proof}
		
		This can be used for computation of  $I(a, 1)$:
		\begin{multline*}
		I(a, 1) = \int_{0}^{a}\int_{0}^{1}x\oplus y~dydx = \\
		= {1 \over 2}\int_{0}^{a}\int_{0}^{1}\left(x\oplus y + x \oplus (1 - y)\right)~dydx = {1 \over 2}\int_{0}^{a}\int_{0}^{1} 1~dydx = {a \over 2}.
		\end{multline*}
	\end{proof}
	
	Applying homogeneity property one immediately gets
	\begin{corollary}
		For every  $0 \le a \le {1 \over 2^n}$  
		$$I\left(a, {1 \over 2^n}\right) = {a \over 2^{2n + 1}}.$$
	\end{corollary}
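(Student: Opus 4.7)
The plan is to deduce this corollary directly from the previously established homogeneity property $I(2a,2b) = 8 I(a,b)$ together with the identity $I(a,1) = a/2$ for $0 \le a \le 1$. No substantial obstacle is anticipated: the argument is a one-line rescaling, and the main point is simply to verify that the input to the rescaling lies in the regime where $I(\cdot,1) = (\cdot)/2$ is applicable.

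In detail, I would first iterate the homogeneity property $n$ times to obtain
$$
I(2^n a,\, 2^n b) \;=\; 8^n\, I(a,b)
$$
for all $a,b \ge 0$. Applied with $b = 1/2^n$, this yields
$$
I\!\left(2^n a,\, 1\right) \;=\; 8^n\, I\!\left(a,\, \tfrac{1}{2^n}\right).
$$
Next I would observe that the hypothesis $0 \le a \le 1/2^n$ is exactly what is needed to ensure $0 \le 2^n a \le 1$, so the previous property gives $I(2^n a, 1) = 2^n a / 2$. Substituting and solving,
$$
I\!\left(a,\, \tfrac{1}{2^n}\right) \;=\; \frac{1}{8^n}\cdot \frac{2^n a}{2} \;=\; \frac{a}{2\cdot 4^n} \;=\; \frac{a}{2^{2n+1}},
$$
which is the claimed identity.

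The only point that could conceivably require care is the almost-everywhere nature of the pointwise relation $2x\oplus 2y = 2(x\oplus y)$ underlying homogeneity, but this has already been handled in the proof of the homogeneity property itself, so it is not an issue here. Hence the corollary follows purely by rescaling.
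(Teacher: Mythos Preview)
Your proof is correct and is exactly the argument the paper has in mind: the paper simply states ``Applying homogeneity property one immediately gets'' before the corollary, and your iteration of $I(2a,2b)=8I(a,b)$ combined with $I(\cdot,1)=(\cdot)/2$ is precisely that application.
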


	In the following proposition we establish a recurent relation for  $f_C$:
	
	\begin{proposition}
		For all $0 \le a, b \le {1 \over 2}$ the following identity holds: 
		$$I\big({1 \over 2} + a, b \big) = {1 \over 2}ab  + {1 \over 8}b + I(a, b).$$
	\end{proposition}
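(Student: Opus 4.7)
The plan is to split the outer integral at $x = \tfrac{1}{2}$ and handle the two pieces separately, exploiting the fact that adding $\tfrac{1}{2}$ flips only the first binary digit when the other operand lies in $[0,\tfrac{1}{2}]$.

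First I would write
\[
I\Bigl(\tfrac{1}{2} + a,\, b\Bigr) \;=\; \int_{0}^{1/2}\!\!\int_{0}^{b} x\oplus y\;dy\,dx \;+\; \int_{1/2}^{1/2 + a}\!\!\int_{0}^{b} x\oplus y\;dy\,dx.
\]
The first summand is $I(\tfrac{1}{2}, b)$, which by the symmetry property equals $I(b, \tfrac{1}{2})$. Since $0 \le b \le \tfrac{1}{2}$, the Corollary applied with $n=1$ yields $I(b, \tfrac{1}{2}) = \tfrac{b}{8}$, accounting for the $\tfrac{1}{8}b$ term in the claimed identity.

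For the second summand I would substitute $x = \tfrac{1}{2} + u$ with $u \in [0, a]$. The key observation is the pointwise identity
\[
\Bigl(\tfrac{1}{2} + u\Bigr) \oplus y \;=\; \tfrac{1}{2} + (u \oplus y) \quad \text{for a.e. } u, y \in \bigl[0, \tfrac{1}{2}\bigr],
\]
which follows immediately from the binary representations: for non-dyadic $u, y \in [0,\tfrac{1}{2}]$ the first digits of $u$ and $y$ are $0$, the first digit of $\tfrac{1}{2}+u$ is $1$, and the remaining digits are unchanged by the shift. The set of dyadic rationals in $[0,\tfrac{1}{2}]^2$ has measure zero, so the identity holds almost everywhere and can be used inside the integral. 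Consequently,
\[
\int_{1/2}^{1/2+a}\!\!\int_{0}^{b} x\oplus y\;dy\,dx \;=\; \int_{0}^{a}\!\!\int_{0}^{b} \Bigl(\tfrac{1}{2} + u\oplus y\Bigr)\,dy\,du \;=\; \tfrac{1}{2}ab + I(a,b).
\]

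Adding the two pieces gives exactly $I(\tfrac{1}{2}+a, b) = \tfrac{1}{2}ab + \tfrac{1}{8}b + I(a,b)$, as required. The only delicate point is the bitwise identity for $(\tfrac{1}{2}+u)\oplus y$, but since we work under the Lebesgue integral it suffices to justify it off the countable set of dyadic rationals, where it is a one-line inspection of binary digits.
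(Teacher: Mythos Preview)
Your proof is correct and follows essentially the same approach as the paper: split the outer integral at $\tfrac{1}{2}$, identify $\int_{0}^{1/2}\int_{0}^{b} x\oplus y\,dy\,dx = I(\tfrac{1}{2},b) = \tfrac{b}{8}$ via the Corollary, and handle the remaining piece by the substitution $x = \tfrac{1}{2}+u$ together with the bitwise identity $(\tfrac{1}{2}+u)\oplus y = \tfrac{1}{2} + (u\oplus y)$ for $u,y\in[0,\tfrac12]$. Your added remark that this identity only needs to hold off a null set is a nice touch the paper leaves implicit.
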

	
	\begin{proof}
	Represent the integral as a sum of two parts
		\begin{multline*}
			I\left({1 \over 2} + a, b\right) = \int_{0}^{{1 \over 2} + a}\int_{0}^{b} x\oplus y~dydx = \\
			= \int_{1 \over 2}^{{1 \over 2} + a}\int_{0}^{b} x\oplus y~dydx + \int_{0}^{1 \over 2}\int_{0}^{b} x\oplus y~dydx.
		\end{multline*}
		
		Making the change of variable  $x = {1 \over 2} + t$ one gets
		\begin{multline*}
		\int_{1 \over 2}^{{1 \over 2} + a}\int_{0}^{b} x\oplus y~dydx = \int_{0}^{a}\int_{0}^{b}\left({1 \over 2} + t\right)\oplus y~dydt = \\
		= \int_{0}^{a}\int_{0}^{b}\left(t\oplus y + {1 \over 2}\right)dydt = {1 \over 2}ab + I(a, b).
		\end{multline*}
		
		Hence
		$$
		I(a, b) = {1 \over 2}ab + I(a, b) + I\left({1 \over 2}, b\right) = {1 \over 2}ab  + {1 \over 8}b + I(a, b).
		$$
	\end{proof}
	
	Let us prove another similar relation
	\begin{proposition}
		For every  $0 \le a, b \le {1 \over 2}$ one has 
		$$I\bigl({1 \over 2} + a, {1 \over 2} + b\bigr) = {1 \over 16} + {3 \over 8}a + {3 \over 8}b + I(a, b).$$
	\end{proposition}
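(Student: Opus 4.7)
The plan is to mimic the approach of the previous proposition, splitting the domain $[0,\tfrac{1}{2}+a]\times[0,\tfrac{1}{2}+b]$ into the four natural rectangles
\begin{align*}
R_{00}&=[0,\tfrac{1}{2}]\times[0,\tfrac{1}{2}], & R_{10}&=[\tfrac{1}{2},\tfrac{1}{2}+a]\times[0,\tfrac{1}{2}],\\
R_{01}&=[0,\tfrac{1}{2}]\times[\tfrac{1}{2},\tfrac{1}{2}+b], & R_{11}&=[\tfrac{1}{2},\tfrac{1}{2}+a]\times[\tfrac{1}{2},\tfrac{1}{2}+b],
\end{align*}
and evaluating each piece separately using the already proven properties of $I$.

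The key pointwise observation is that, for almost every $s,t\in[0,\tfrac{1}{2}]$, looking at the binary decomposition gives $(\tfrac{1}{2}+s)\oplus t=\tfrac{1}{2}+s\oplus t$ (the first bit of the left side is $1\oplus 0=1$ and the remaining bits give $s\oplus t$) and $(\tfrac{1}{2}+s)\oplus(\tfrac{1}{2}+t)=s\oplus t$ (the first bits cancel). This is precisely the bit-level fact that was used in the proof of the previous proposition and it makes the integrals on $R_{10}, R_{01}, R_{11}$ reducible to smaller instances of $I$ after the change of variables $x=\tfrac{1}{2}+s$, $y=\tfrac{1}{2}+t$.

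Using these identities together with $I(\tfrac{1}{2},\tfrac{1}{2})=\tfrac{1}{16}$ and $I(a,\tfrac{1}{2})=\tfrac{a}{8}$, $I(b,\tfrac{1}{2})=\tfrac{b}{8}$ (the latter being the $n=1$ case of the corollary), one computes
\begin{align*}
\iint_{R_{00}}x\oplus y\,dy\,dx&=I(\tfrac{1}{2},\tfrac{1}{2})=\tfrac{1}{16},\\
\iint_{R_{10}}x\oplus y\,dy\,dx&=\int_0^a\!\!\int_0^{1/2}\!\bigl(\tfrac{1}{2}+s\oplus y\bigr)dy\,ds=\tfrac{a}{4}+I(a,\tfrac{1}{2})=\tfrac{a}{4}+\tfrac{a}{8}=\tfrac{3a}{8},\\
\iint_{R_{01}}x\oplus y\,dy\,dx&=\tfrac{3b}{8} \qquad\text{(by symmetry)},\\
\iint_{R_{11}}x\oplus y\,dy\,dx&=\int_0^a\!\!\int_0^b s\oplus t\,dt\,ds=I(a,b).
\end{align*}
Summing the four contributions yields exactly $\tfrac{1}{16}+\tfrac{3}{8}a+\tfrac{3}{8}b+I(a,b)$.

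There is no serious obstacle here: the argument is a direct generalization of the previous proposition, and the only subtlety is the verification of the two bitwise identities for $\oplus$, which holds up to a set of measure zero (the dyadic rationals) and so has no effect on the integrals.
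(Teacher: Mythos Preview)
Your proof is correct and essentially identical to the paper's: the same four-rectangle decomposition, the same bitwise identities $(\tfrac{1}{2}+s)\oplus t=\tfrac{1}{2}+s\oplus t$ and $(\tfrac{1}{2}+s)\oplus(\tfrac{1}{2}+t)=s\oplus t$, and the same use of $I(\tfrac{1}{2},\tfrac{1}{2})=\tfrac{1}{16}$ and $I(a,\tfrac{1}{2})=\tfrac{a}{8}$ to evaluate each piece.
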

	\begin{proof}
		Similarly to the arguments of the previous proposition one obtains
		\begin{multline*}
			I\left({1 \over 2} + a, {1 \over 2} + b\right) = \int_{0}^{{1 \over 2} + a}\int_{0}^{{1 \over 2} + b} x\oplus y~dydx =  \\
			= \int_{0}^{1 \over 2}\int_{0}^{1 \over 2} x\oplus y~dydx + \int_{1 \over 2}^{{1 \over 2} + a}\int_{0}^{1 \over 2} x\oplus y~dydx + \\
			+ \int_{0}^{1 \over 2}\int_{1 \over 2}^{{1 \over 2} + b} x\oplus y~dydx + \int_{1 \over 2}^{{1 \over 2} + a}\int_{1 \over 2}^{{1 \over 2} + b} x\oplus y~dydx.
		\end{multline*}
		
		Clearly
		$$
		\int_{0}^{1 \over 2}\int_{0}^{1 \over 2} x\oplus y~dydx = I\left({1 \over 2}, {1 \over 2}\right) = {1 \over 16}.
		$$
		To compute the second integral let us make the variables change $x = {1 \over 2} + t$:
		\begin{multline*}
			\int_{1 \over 2}^{{1 \over 2} + a}\int_{0}^{1 \over 2} x\oplus y~dydx =  \int_{0}^{a}\int_{0}^{1 \over 2}\left({1 \over 2} + t\right)\oplus y~dydt = \\
			 = \int_{0}^{a}\int_{0}^{1 \over 2}\left({1 \over 2} + t\oplus y\right)~dydt = {1 \over 4}a + I\left({1 \over 2}, a\right) = \\ 
			 = {1 \over 4}a + {1 \over 8}a = {3 \over 8}a.
		\end{multline*}
		In the same way one gets the following formula for the third integral:
		$$
		\int_{0}^{1 \over 2}\int_{1 \over 2}^{{1 \over 2} + b} x\oplus y~dydx = {3 \over 8}b.
		$$
		
		To compute the last integral, let us set $x = {1 \over 2} + t$, $y = {1 \over 2} + u$:
		\begin{multline*}
			\int_{1 \over 2}^{{1 \over 2} + a}\int_{1 \over 2}^{{1 \over 2} + b} x\oplus y~dydx = \int_{0}^{a}\int_{0}^{b} \left(t + {1 \over 2}\right)\oplus \left(u + {1 \over 2}\right)~dudt = \\
			 = \int_{0}^{a}\int_{0}^{b} t\oplus u~dudt = I(a, b).
		\end{multline*}
		
		Finally,
		$$
		I\left({1 \over 2} + a, {1 \over 2} + b\right) = {1 \over 16} + {3 \over 8}(a + b) + I(a, b).
		$$
	\end{proof}
	
	It remains to relate  $f_C$ and $I$.
	
	\begin{theorem}
		For all non-negative $x, y \in \mathbb{R}_{+}$ the following relation holds: $$f_C(x, y) = 8I(x, y) - 2I(x, x) - 2I(y, y).$$
	\end{theorem}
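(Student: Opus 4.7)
The plan is to show that $g(x,y) := 8I(x,y) - 2I(x,x) - 2I(y,y)$ satisfies exactly the same symmetry, homogeneity, and pair of translation identities that determine $f_C$, and then to argue by a contraction-type argument that these properties force $g = f_C$ identically.

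\emph{Step 1: Matching functional equations.} Using the properties of $I$ established above (symmetry, homogeneity $I(2x,2y) = 8I(x,y)$, and the two translation identities from the previous propositions), I verify by direct substitution that for $x, y \in [0, 1/2]$,
\[g(2x, 2y) = 8 g(x, y), \quad g(\tfrac{1}{2} + x, y) = -\tfrac{1}{8} - \tfrac{3}{2} x + y + 4xy + g(x, y),\]
\[g(\tfrac{1}{2} + x, \tfrac{1}{2} + y) = \tfrac{1}{4} + \tfrac{3}{2}(x + y) + g(x, y).\]
For instance, $8 I(\tfrac{1}{2} + x, y) = 4xy + y + 8 I(x, y)$ while $-2 I(\tfrac{1}{2} + x, \tfrac{1}{2} + x) = -\tfrac{1}{8} - \tfrac{3}{2} x - 2 I(x, x)$, and $-2 I(y, y)$ is unchanged; summing gives the second identity, and the other two are analogous. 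These are precisely the identities satisfied by $f_C$ via Proposition \ref{scalability} and Proposition \ref{nice}.

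\emph{Step 2: Propagating the difference to zero.} Set $h := f_C - g$. Then $h$ is continuous, symmetric, and inherits
\[h(2x, 2y) = 8 h(x, y), \qquad h(\tfrac{1}{2} + x, y) = h(x, y) = h(\tfrac{1}{2} + x, \tfrac{1}{2} + y)\]
for $x, y \in [0, 1/2]$; by symmetry also $h(x, \tfrac{1}{2} + y) = h(x, y)$. In other words, $h$ is half-periodic on $[0, 1]^2$, so $h(u, v) = h(u \bmod \tfrac{1}{2}, v \bmod \tfrac{1}{2})$ for $(u, v) \in [0, 1]^2$. Combining the periodicity with the homogeneity, on $[0, 1/2]^2$ we get $h(x, y) = \tfrac{1}{8} h(T(x, y))$, where $T(x, y) := (2x \bmod \tfrac{1}{2}, 2y \bmod \tfrac{1}{2})$ maps $[0, 1/2]^2$ into itself. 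Iterating yields $|h(x, y)| \le 8^{-n} \sup_{[0, 1/2]^2} |h|$ for every $n$; since $h$ is bounded on the compact square, sending $n \to \infty$ forces $h \equiv 0$ on $[0, 1/2]^2$. The periodicity then extends this to $[0, 1]^2$, and the homogeneity $h(2x, 2y) = 8 h(x, y)$ extends it to all of $\mathbb{R}_+^2$.

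The main obstacle lies in Step 1: the three functional equations for $g$ are one-line consequences of the corresponding identities for $I$, but the constants ($-\tfrac{1}{8}$, $\tfrac{1}{4}$, $\tfrac{3}{2}$) arise from linear combinations of $\tfrac{1}{16}$, $\tfrac{3}{8}$, and $\tfrac{1}{8}$ with coefficients $8$ and $-2$, so the algebraic cancellations must be tracked carefully. Once these identities are in place, Step 2 is a standard self-similarity argument requiring nothing beyond continuity and boundedness on a compact set.
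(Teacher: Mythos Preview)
Your proof is correct and follows essentially the same route as the paper: verify that $g=8I-2I(\cdot,\cdot)-2I(\cdot,\cdot)$ satisfies the two translation identities of Proposition~\ref{nice}, then use half-periodicity of the difference together with the homogeneity $h(2x,2y)=8h(x,y)$ to force $h\equiv 0$. The only cosmetic difference is that the paper phrases the final step as a maximum argument (if the sup $M$ of $|h|$ on $[0,1]^2$ is attained at some $(x_0,y_0)\in[0,\tfrac12]^2$, then $|h(2x_0,2y_0)|=8M$, a contradiction), whereas you iterate the contraction $h=\tfrac18\,h\circ T$ explicitly; these are equivalent formulations of the same self-similarity trick.
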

	
	\begin{proof}
		By homogeneity $f_C(x, y)$ and $I(x, y)$ it is sufficent to prove this relation on $[0, 1]^2$. 
		
		Set $f_1(x, y) = 8I(x, y) - 2I(x, x) - 2I(y, y)$. We prove that $f_1$ satifies the same relation as $f_C$ (see Proposition \ref{nice}). Indeed, for all, $0 \le x, y \le {1 \over 2}$:
		\begin{multline*}
		f_1\left({1 \over 2} + x, y\right) = 8I\left({1 \over 2} + x, y\right) - 2I\left({1 \over 2} + x, {1 \over 2} + x\right) - 2I(y, y) = \\
		 = 4xy + y + 8I(x, y) - 2\left({1 \over 16} + {3 \over 8}(x + x) + I(x, x)\right) - 2I(y, y) = \\
		 = -{1 \over 8} - {3 \over 2}x + y + 4xy+ f_1(x, y),
		\end{multline*}
		
		\begin{multline*}
		f_1\left({1 \over 2} + x, {1 \over 2} + y\right) = \\ =  8I\left({1 \over 2} + x, {1 \over 2} + y\right) - 2I\left({1 \over 2} + x, {1 \over 2} + x\right) - 2I\left({1 \over 2} + y, {1 \over 2} + y\right) = \\
		= {1 \over 2} + 3x + 3y + 8I(x, y) -  2\left({1 \over 16} + {3 \over 8}(x + x) + I(x, x)\right) -\\ - 2\left({1 \over 16} + {3 \over 8}(y + y) + I(y, y)\right)
		= {1 \over 4} + {3 \over 2}(x + y) + f_1(x, y).
		\end{multline*}
		
		It remains to show  that $M = \sup_{0 \le x \le 1, 0 \le y \le 1}|f - f_1|=0$. Note that the supremum is attained on  $\left[0, {1 \over 2}\right]^2$, because $f - f_1$ 
		is invariant with respect to the shifts $x \to x + \frac{1}{2}$, $y \to y + \frac{1}{2}$.
		If $M$ is larger than zero and attained at some point $(x_0, y_0)$, where $0 \le x_0, y_0 \le {1 \over 2}$, then the value of $|f - f_1|$ at  $(2x_0, 2y_0)$ equals $8M$.
		We obtained a contradiction.
	\end{proof}
	
	Applying the above result we obtain the following integral representation theorem 
	for our solution to the dual problem.
	
	\begin{theorem}
	The function 
$$
F(x,y) = \int_{0}^x \int_{0}^y s \oplus t \ ds dt - \frac{1}{4} \int_{0}^x \int_{0}^x s \oplus t \ ds dt
- \frac{1}{4} \int_{0}^y \int_{0}^y s \oplus t \ ds dt
$$
solves the dual  problem 
	$$
\int_{[0,1]^2} F(x,y) dx dy  + \int_{[0,1]^2} F(x,z) dx dz + \int_{[0,1]^2} F(y,z) dy dz  \to \max,
$$
$$F(x,y) + F(x,z) + F(y,z) \le xyz$$
to the primal $(3,2)$-Kantorovich  problem 
	$$
	\int xyz d \pi \to \min, \ (x,y,z) \in [0,1]^3,
	$$
	considered on the space of measure which projections onto principal hyperplanes are Lebesgue measures on $[0,1]^2$.
	\end{theorem}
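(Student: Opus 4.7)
The plan is to identify the function $F(x,y)$ in the statement with $\tfrac{1}{8}f_C(x,y)$ and then deduce the theorem from results already established: the main dual inequality of the previous section and the explicit form of the primal optimizer from Section 5.

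First I would invoke the integral representation theorem proved immediately above, namely $f_C(x,y) = 8I(x,y) - 2I(x,x) - 2I(y,y)$, where $I(a,b) = \int_0^a\!\int_0^b s\oplus t\, ds\, dt$. Dividing by $8$ one gets exactly
\[
\tfrac{1}{8}f_C(x,y) = I(x,y) - \tfrac{1}{4}I(x,x) - \tfrac{1}{4}I(y,y) = F(x,y),
\]
so the function $F$ in the statement is nothing but $\tfrac{1}{8}f_C$. Hence the dual inequality $F(x,y)+F(x,z)+F(y,z)\le xyz$ is simply the pointwise bound $F_C(x,y,z)\le 8xyz$ established in Corollary~\ref{1803}, divided by $8$. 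This gives dual feasibility for free.

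Next I would verify optimality by comparing the dual value with the primal minimum. Let $\pi$ be the optimizer of the minimization problem from Section~5, concentrated on the graph of $(x,y)\mapsto x\oplus y$, so that $x\oplus y\oplus z = 0$ holds $\pi$-almost surely. By Proposition~\ref{xyz-equality}, at every such point one has the \emph{equality} $F_C(x,y,z) = 8xyz$, i.e.\ $F(x,y)+F(x,z)+F(y,z) = xyz$ pointwise on $\mathrm{supp}(\pi)$. Using that the two-dimensional marginals of $\pi$ are Lebesgue on $[0,1]^2$, one then computes
\[
\int F(x,y)\,dx\,dy + \int F(x,z)\,dx\,dz + \int F(y,z)\,dy\,dz = \int \bigl(F(x,y)+F(x,z)+F(y,z)\bigr)\,d\pi = \int xyz\,d\pi.
\]
Since the right-hand side is the primal minimum and $F$ is dual-feasible, the standard weak-duality argument (which gives $\int F_{xy}\,d\mu_{12}+\int F_{xz}\,d\mu_{13}+\int F_{yz}\,d\mu_{23} \le \int xyz\,d\tilde{\pi}$ for any feasible pair) forces $F$ to be optimal for the dual problem stated in Theorem~\ref{dualth}.

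There is essentially no serious obstacle left: all the analytic difficulty was already absorbed into the pointwise inequality $F_C\le 8xyz$ and its equality case on the Sierpi\'nski tetrahedron. The only step requiring a little care is justifying that $F\in L^1$ of the corresponding projections so that the integrals in the dual functional make sense; this follows from the continuity of $f_C$ on the compact square $[0,1]^2$, established in the preceding subsection on continuity. After that, the proof reduces to substituting the identification $F = \tfrac{1}{8}f_C$ into the earlier results and citing Proposition~\ref{xyz-equality} and Corollary~\ref{1803}.
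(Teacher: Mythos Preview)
Your proposal is correct and follows essentially the same route as the paper: identify $F=\tfrac{1}{8}f_C$ via the integral representation just proved, invoke Corollary~\ref{1803} for feasibility, and use Proposition~\ref{xyz-equality} together with the primal optimizer of Section~5 to get equality of the primal and dual values. The paper is terser---it simply declares the theorem a consequence of the preceding results---but the weak-duality argument you spell out is exactly the implicit step.
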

	
	\section{Concluding remarks}

	Numerical experiments  visually reveal fractal structure of the solutions
	to (3,2)-Kantorovich problem for other cost functions and projections.
	This happens even under absence of symmetry, which, in turn, means
	that the solutions do not posess dyadic structure.
	Which properties of our main example are preserved in general case?
	Here we discuss several natural hypotheses. 
	
		\begin{question}
		\label{q1}
		Consider the $(3,2)$-Kantorovich problem on the set $X \times Y \times Z$, where 
		$$X = \{x_0 < x_1 \ldots < x_{2^n-1}\},$$
		 $$Y = \{y_0 < y_1 \ldots < y_{2^n-1}\},$$
		$$Z = \{z_0 <  z_1 \ldots < z_{2^n-1}\}.$$
		As usual, $c = xyz$ and the projections are supposed to be uniform.
		We want to maximize $\int xyz d \pi$. 
		
		Is it true that unifrom measure concentrated 
		on the points $(x_i, y_j, z_k)$ with $i \oplus j \oplus k = 0$ is optimal?
		\end{question}

	\begin{question}
	\label{q2}
Consider the dual  $(3,2)$-Kantorovich problem   on the set $[0,1]^3$.
$$
\int F(x,y) d \mu_{xy} + \int G(x,z) d \mu_{xz} +  \int H(y,z) d \mu_{yz} \to \max,
$$
$$
F(x,y) + G(x,z) + H(y,z) \le xyz
$$
for some triple of measures $\mu_{xy}, \mu_{xz}, \mu_{yz}$.

Is it true that $F$ satisfies inequality
$$
F(x + \Delta x, y + \Delta y) + 
F(x, y) - 
F(x + \Delta x, y) - 
F(x, y+ \Delta y) \ge 0
$$
for every $x, y, \Delta x \ge 0, \Delta y \ge 0$? Equivalently, $F$ has the representation
$$
F(x,y) = m([0,~x] \times [0,~y] )+ f(x) + g(y)
$$
for some nonnegative measure $m$ and some functions $f, g$?
\end{question}

	Numerical computations demonstrate that Question \ref{q2} has a negative answer.
	The answer to Question \ref{q1} is negative in general, but remarkably the answer is affirmative for $n=2$.

	 	\begin{example}
	 	Consider the discrete  cube $8\times 8\times 8$,
	 	$$X=Y=Z=\{0, \eps, 2\eps, 1-4\eps, 1-3\eps, 1-2\eps, 1-\eps, 1\}.$$ For sufficiently small  $\eps$, the uniform  measure $M'$, concentrated on the points $(x_i, y_j, z_k)$ with $i \oplus j \oplus k = 0$, $i, j, k \in \{0,1, \ldots, 2^3-1\}$,  is not optimal.
	 	Let us say that numbers $0, 1, 2$ are \textbf{small}. Other numbers are \textbf{large}.
	 	Consider the following  competitor: measure $M''$ assigns to a point $(x_i, y_j, z_k)$ the following value :
		$$\begin{cases}
			\frac{1}{3},&\text{if all three indexes $i$, $j$ and $k$ are small;}\\
		 	0,&\text{if two indexes are small and one is large;}\\
		 	\frac{1}{5},&\text{if one index is small and two are large;}\\
		 	\frac{2}{25},&\text{if all three indexes $i$, $j$ and $k$ are large.}
		\end{cases}
		$$
		 Integrals $\int xyz dM'$ and $\int xyz dM''$ are the polynomials in $\eps$. Their free terms are equal to $12$ and $125 \times \frac{2}{25} = 10$ respectively. Thus $\int xyz dM'>\int xyz dM''$ for sufficiently small epsilon. 		
\end{example}

Let $I = [0, 1]^3$ be the unit cube  and  $\mu$ be arbitrary measure on  $I$. 
	We denote by $F_{\mu}$ the  distribution function of $\mu$
	 $$F_\mu(a, b, c) = \mu([0, a] \times [0, b] \times [0, c]).$$

	\begin{lemma}
		Let  $\mu$ be a measure on  $I$. Then the following identity holds:
		
		$$
		\int\limits_{I}(1 - x)(1 - y)(1 - z)d\mu = \int\limits_{I}F_\mu(x, y, z)dxdydz.
		$$
	\end{lemma}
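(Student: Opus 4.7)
The plan is to express $F_\mu$ as an integral of an indicator function against $\mu$ and then swap the order of integration via Fubini's theorem. Concretely, note that
$$
F_\mu(x,y,z) = \int_{I} \mathbf{1}_{\{u \le x\}} \mathbf{1}_{\{v \le y\}} \mathbf{1}_{\{w \le z\}} \, d\mu(u,v,w).
$$
Substituting this into $\int_I F_\mu(x,y,z)\,dx\,dy\,dz$ yields a double integral with a nonnegative integrand, so Fubini's theorem applies without any integrability obstruction.

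After swapping the order of integration, the $(x,y,z)$-integral factors as a product of three one-dimensional integrals:
$$
\int_0^1 \mathbf{1}_{\{u\le x\}}\,dx = 1-u, \qquad \int_0^1 \mathbf{1}_{\{v\le y\}}\,dy = 1-v, \qquad \int_0^1 \mathbf{1}_{\{w\le z\}}\,dz = 1-w.
$$
Multiplying these and integrating against $d\mu(u,v,w)$ produces the claimed right-hand side $\int_I (1-u)(1-v)(1-w)\,d\mu(u,v,w)$, which after renaming variables is the desired identity.

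There is essentially no obstacle in this argument: the measure $\mu$ is a finite Borel measure on the compact cube $I$, the integrand $\mathbf{1}_{\{u\le x, v\le y, w\le z\}}$ is bounded and nonnegative, and the product measure $d\mu(u,v,w)\,dx\,dy\,dz$ is finite on $I\times I$, so Tonelli/Fubini applies directly. The ambiguity between strict and non-strict inequalities on the boundary sets $\{u=x\}$ etc.\ is immaterial because they have Lebesgue measure zero in the $(x,y,z)$-variable.
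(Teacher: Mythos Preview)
Your proof is correct and is essentially the same as the paper's: both compute the $(\mu \otimes \text{Lebesgue})$-measure of the set $D = \{(p,q)\in I\times I : p \le q \text{ coordinatewise}\}$ in two ways via Fubini, obtaining $\int_I (1-x)(1-y)(1-z)\,d\mu$ when integrating over $q$ first and $\int_I F_\mu\,dxdydz$ when integrating over $p$ first. The paper phrases it in terms of the set $D$ while you write out the indicator explicitly, but the argument is identical.
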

	
	\begin{proof}
		Let  $I'$ be the unit cube endowed with the uniform Lebesgue measure $\omega$. 
		One can consider the product $I \times I'$ with the product measure $d\mu \otimes d\omega$.
		Set: $$D = \{(p, q) \in I \times I' \mid \text{$p$ is not larger than $q$ coordinatewise}\}.$$ 
		
		Let us find $(\mu\otimes\omega)(D)$. We apply to this end the Fubini theorem
		$$
		\int\limits_D d\mu\otimes d\omega = \int\limits_{(x, y, z) \in I} \int\limits_{\substack
			{(x_1, y_1, z_1) \in I', \\ (x_1, y_1, z_1) \ge (x, y, z)}} d\omega d\mu = \int\limits_{(x, y, z) \in I}(1 - x)(1 - y)(1 - z)d\mu.
		$$
		
		On the other hand,
		$$
		\int\limits_{(x, y, z) \in I} = \int\limits_{(x, y, z) \in I'}\int\limits_{\substack
			{(x_1, y_1, z_1) \in I, \\ (x_1, y_1, z_1) \le (x, y, z)}} d\mu d\omega = \int\limits_{(x, y, z) \in I}F_\mu(x, y, z)d\omega.
		$$
	\end{proof}
	
	Let  $\mu_{xy}, \mu_{yz}, \mu_{zx}$ be projections of  $\mu$ onto the corresponding  principal hyperplanes.
	On can rewrite the integral as follows:
	
	\begin{multline*}
	\int\limits_I(1 - x)(1 - y)(1 - z)d\mu = 1 - \int\limits_{I_{xy}}xy~d\mu_{xy} - \int\limits_{I_{yz}}yz~d\mu_{yz} - \int\limits_{I_{zx}}zx~d\mu_{zx} + \\
	+ \int\limits_{I_{xy}}x~d\mu_{xy} + \int\limits_{I_{yz}}y~d\mu_{yz} + \int\limits_{I_{zx}}z~d\mu_{zx} - \int\limits_{I}xyz~d\mu = \\
	= C(\mu_{xy}, \mu_{yz}, \mu_{zx}) - \int\limits_{I}xyz~d\mu,
	\end{multline*}
	where $C(\mu_{xy}, \mu_{yz}, \mu_{zx})$ only depends on the projections of  $\mu$ onto the principal hyperplanes.

	We want to find a measure  $\pi$  which minimizes  $\int\limits xyz d\pi$ on the set of  all $(3, 2)$-stochastic measures on  $X \times Y \times Z$.

	Finally, consider  $$X = \{x_0 < x_1 \ldots < x_{2^n-1}\},$$ 
	$$Y = \{y_0 < y_1 \ldots < y_{2^n-1}\},$$
	$$Z = \{z_0 < z_1 \ldots < z_{2^n-1}\}.$$
	Without loss of generality assume that $X \times Y \times Z \subset I$.
		Let  $\mu_\oplus$ be a measure on $I$ which is supported on  $X \times Y \times Z$
		and defined by $$\mu_\oplus(x_i, y_j, z_k) = {1 \over 4^n},$$ if $i \oplus j \oplus k = 0$, and  $$\mu_\oplus(x_i, y_j, z_k)  =0$$ in the opposite case.

	\begin{theorem} Assume that $|X| = |Y| = |Z| = 4$. 
		Let  $\mu$ be arbitrary  measure  $X \times Y \times Z$
		with uniform projections on $X \times Y, X \times Z, Y \times Z$.
		Then $$\int\limits xyz~d\mu \ge \int\limits xyz~d\mu_\oplus.$$
		Moreover,
		$$
		F_{\mu_\oplus} \ge F_\mu
		$$
		at every point.
	\end{theorem}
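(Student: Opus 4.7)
The plan is to derive the cost inequality from the stronger pointwise distribution-function inequality $F_{\mu_\oplus} \geq F_\mu$, and to verify the latter by a finite case check at the lattice points that exploits the uniformity of the 2-marginals.

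First, I would apply the preceding lemma together with the identity that follows it to rewrite, for any probability measure $\nu$ on $I$,
$$
\int_I xyz\, d\nu = C(\nu_{xy}, \nu_{yz}, \nu_{zx}) - \int_I F_\nu(x,y,z)\, dx\, dy\, dz,
$$
where $C$ depends only on the 2-projections. Since $\mu$ and $\mu_\oplus$ share all three 2-marginals, the term $C$ cancels on subtraction and I obtain
$$
\int_I xyz\, d\mu - \int_I xyz\, d\mu_\oplus = \int_I \bigl[F_{\mu_\oplus}(x,y,z) - F_\mu(x,y,z)\bigr]\, dx\, dy\, dz,
$$
so the first assertion of the theorem becomes an immediate consequence of the second.

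Both measures are supported on the finite lattice $X \times Y \times Z$, so $F_\mu$ and $F_{\mu_\oplus}$ are constant on each half-open box $[x_a, x_{a+1}) \times [y_b, y_{b+1}) \times [z_c, z_{c+1})$; it therefore suffices to verify the pointwise inequality at the $64$ lattice corners. Writing $\alpha = a+1$, $\beta = b+1$, $\gamma = c+1$, the value $F_{\mu_\oplus}(x_a, y_b, z_c)$ equals $N(\alpha, \beta, \gamma)/16$ with
$$
N(\alpha, \beta, \gamma) := \#\{(i,j,k) : 0 \leq i < \alpha,\ 0 \leq j < \beta,\ 0 \leq k < \gamma,\ i \oplus j \oplus k = 0\},
$$
while $V(\alpha, \beta, \gamma) := \mu([0, x_a] \times [0, y_b] \times [0, z_c])$ is bounded by the three naive projection estimates $V \leq \min(\alpha\beta, \alpha\gamma, \beta\gamma)/16$. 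A direct enumeration of the twenty triples up to coordinate permutation shows that this projection bound already matches $N/16$ in every case, except at the two triples $(\alpha, \beta, \gamma) = (2,3,3)$ and $(3,3,3)$ (up to permutation), where the projection bound gives $6/16$ and $9/16$ whereas $N$ equals $5$ and $7$ respectively.

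To handle these two hard cases I would combine all three 2-marginals simultaneously via a two-step decomposition:
\begin{align*}
V &= \frac{\alpha\beta}{16} - \mu\bigl([0, x_a] \times [0, y_b] \times (z_c, z_3]\bigr), \\
\mu\bigl([0, x_a] \times [0, y_b] \times (z_c, z_3]\bigr) &\geq \mu_{xz}([0, x_a] \times (z_c, z_3]) - \mu_{yz}((y_b, y_3] \times (z_c, z_3]) \\
&= \frac{(4-\gamma)(\alpha + \beta - 4)}{16},
\end{align*}
valid whenever $\alpha + \beta \geq 4$. This yields $V \leq [\alpha\beta - (4-\gamma)(\alpha + \beta - 4)]/16$, which evaluates to exactly $5/16$ at $(2,3,3)$ and $7/16$ at $(3,3,3)$, matching $N/16$ in both cases. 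The main obstacle is precisely this two-step bound: the naive projection bounds cannot capture the genuine three-way interaction among the 2-marginals, and this inclusion-exclusion-type estimate is what closes the gap. Everything else is a routine tabulation on the $4 \times 4 \times 4$ grid.
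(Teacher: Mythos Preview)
Your proof is correct and follows essentially the same route as the paper: reduce the cost inequality to the pointwise inequality $F_{\mu_\oplus}\ge F_\mu$ via the integration-by-parts lemma, check lattice points, and handle the nontrivial cases by the same inclusion--exclusion type decomposition (your two-step bound with $\gamma=3$ is exactly the paper's estimate $F_\mu(x_i,y_j,z_2)\le (ij+3)/16$). The only difference is organizational---you enumerate the twenty symmetric triples and isolate $(2,3,3)$ and $(3,3,3)$ explicitly, whereas the paper structures the cases by the values of the extreme indices---but the substantive argument is identical.
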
 
	
\begin{proof}Since the projections of  $\mu$ and $\mu_\oplus$ onto the hyperplanes  are equal, one has the following equivalence relation 
		$$
		\int\limits_I xyz~d\mu \ge \int\limits_I xyz~d\mu_\oplus \Leftrightarrow \int\limits_IF_\mu(x, y, z)~dxdydz \le \int\limits_IF_{\mu_\oplus}(x, y, z)~dxdydz.
		$$
		
		Let us prove that $F_{\mu_\oplus} \ge F_\mu$. Since the measures are discrete,
		it is suficient to check the desired inequality at the points $(x_i, y_j, z_k) \in X \times Y \times Z$. 
		Without loss of generality let $i \le j \le k$.
		
		If  $k = 3$, the distribution function satisfies  $F_{\mu_\oplus}(x_i, y_j, z_3) = F_{\mu}(x_i, y_j, z_3) = {(i + 1)(j + 1) \over 16}$. This follows from the fact that  $\mu$ and $\mu_\oplus$ have uniform projections onto  $X \times Y$.

		Let $i = 0$. Then $F_{\mu_\oplus}(x_0, y_j, z_k) = {1 \over 16}\min(j + 1, k + 1) = {j + 1 \over 16}$. 
		Indeed, for  $i = 0$ measure $\mu_\oplus$ is concentrated  at the points $(x_0, y_t, z_t)$, $t \in \{0,1,2,3\}$. Hence $F_{\mu_\oplus}(x_0, y_j, z_k) = {1 \over 16}\#(t \mid 0 \le t \le j, 0 \le t \le k)$. On the other hand $F_\mu(x_0, y_j, z_k) \le F_\mu(x_0, y_j, z_3) = {j + 1 \over 16}$.
		
		It remains to consider the cases when every $i, j, k$ equals $1$ or $2$.
		
		Let $k = 2$.  Compute $F_{\mu_\oplus}(x_i, y_j, z_2)$. To this end we count all triples  $(a, b, c)$ satisfying $0 \le a \le i$, $0 \le b \le j$, $0 \le c \le 2$ and $a \oplus b \oplus c = 0$. For every couple  $(a, b)$ there exists the unique   $c$ having this property except for the case  $a \oplus b = 3$. This happens if and only if $\{a, b\} = \{1, 2\}$. 
		It is easy to check that amount of couples with this property is exactly the number of indices $i,j$
		which takes value $2$, i.e. $i+j-2$.
		Thus the total amount of such triples  $(a, b, c)$ equals $(i + 1)(j + 1) - i - j + 2 = ij + 3$. Hence $F_{\mu_\oplus}(x_i, y_j, z_2) = {ij + 3 \over 16}$.
		
		Represent the number $F_\mu(x_i, y_j, z_2)$ as follows:
		
		\begin{multline*}
		F_\mu(x_i, y_j, z_2) = \sum\limits_{\substack{x \in [0, x_i] \\ y \in [0, y_j]\\ z \in [0, z_2]}} \mu(x, y, z) = \sum\limits_{\substack{x \in [0, x_i] \\ y \in [0, y_j]\\ z \in [0, z_3]}} \mu(x, y, z)  - \sum\limits_{\substack{x \in [0, x_i] \\ y \in [0, y_j]}}\mu(x, y, z_3) = \\
		= F_{\mu}(x_i, y_j, z_3) - \sum\limits_{\substack{x \in [0, x_i] \\ y \in [0, y_3]}}\mu(x, y, z_3) + \sum\limits_{\substack{x \in [0, x_i] \\ y \in [y_{j + 1}, y_3]}}\mu(x, y, z_3),
		\end{multline*}
		where the sum is taken over the atoms of $\mu$.
		
		We know that $F_\mu(x_i, y_j, z_3) = {(i + 1)(j + 1) \over 16}$, because the projection of $\mu$ onto $X \times Y$ 
		is uniform.
		Analogously, the same facts about projections onto $X \times Z$ and $Y \times Z$
		imply
		$$
		\sum\limits_{\substack{x \in [0, x_i] \\ y \in [0, y_3]}}\mu(x, y, z_3) = {i + 1 \over 16}.
		$$
		$$
		\sum\limits_{\substack{x \in [0, x_i] \\ y \in [y_{j + 1}, y_3]}}\mu(x, y, z_3) \le \sum\limits_{\substack{x \in [0, x_3] \\ y \in [y_{j + 1}, y_3]}}\mu(x, y, z_3) = {3 - j \over 16},
		$$
		Hence
		$$
		F_\mu(x_i, y_j, z_2) \le {(i + 1)(j + 1) \over 16} - {i + 1 \over 16} + {3 - j \over 16} = {ij + 3 \over 16}.
		$$
		
		It remains to consider the case  $i = j = k = 1$. One gets immediately $F_{\mu_\oplus}(x_i, y_j, z_k) = {4 \over 16}$, and $F_\mu(x_i, y_j, z_k) \le F_\mu(x_i, y_j, z_3) = {(i + 1)(j + 1) \over 16} = {4 \over 16}$.
	\end{proof}

\end{document}